\newtheorem{thmA}{Theorem}
\newtheorem{thm}{Theorem}
\newtheorem{lemA}{Lemma}
\newtheorem{lem}{Lemma}
\begin{document}

\title{An approximation of It\^o diffusions based on simple random walks}

\author{John van der Hoek\footnote{Address: School of Mathematics and Statistics, University of South Australia, City West Campus, Yungodi Bldg, Level 3, Room 73, GPO Box 2471, Adelaide, South Australia 5001, Australia, e-mail: john.vanderhoek@unisa.edu.au} \\
University of South Australia \\
and Tam\'as Szabados\footnote{Address:
Department of Mathematics, Budapest University of Technology and Economics, M\H{u}egyetem rkp. 3, H
\'ep. V em. Budapest, 1521, Hungary, e-mail: szabados@math.bme.hu, telephone: (+36 1)
463-1111/ext. 5907, fax: (+36 1) 463-1677} \\
Budapest University of Technology and Economics}


\maketitle

\bigskip


\begin{abstract}
The aim of this paper is to develop a sequence of discrete approximations to a one-dimensional It\^o diffusion that almost surely converges to a weak solution of the given stochastic differential equation. Under suitable conditions, the solution of the stochastic differential equation can be reduced to the solution of an ordinary differential equation plus an application of Girsanov's theorem to adjust the drift. The discrete approximation is based on a specific strong approximation of Brownian motion by simple, symmetric random walks (the so-called ``twist and shrink'' method).  A discrete It\^o's formula is also used during the discrete approximation.
\end{abstract}


\renewcommand{\thefootnote}{\alph{footnote}}
\footnotetext{ 2010 \emph{MSC.} Primary 60H10. Secondary 60H35, 60F15.} \footnotetext{\emph{Keywords
and phrases.} It\^o diffusion, strong approximation, random walk, weak solution.}


\section{Introduction} \label{sec:Intro}

Stochastic differential equations (SDE's) have been widely applied to study systems modeled by differential equations that include random effects. Such systems appear in a wide spectrum of fields, like financial mathematics, biological models, physical systems, etc.

Then it is a natural demand to find numerical approximations of a solution that cannot be determined exactly, just like in the case of non-stochastic differential equations, see e.g. \cite[Ch. VI, Section 7]{IW1981} and \cite[Ch. 5, Section 5.2.D]{KS1999}.

The aim of this paper is to develop a sequence of discrete approximations to a one-dimensional It\^o diffusion that almost surely converges to a weak solution of the given SDE. First, the solution of the SDE is reduced to the solution of an ordinary differential equation (ODE), plus an application of Girsanov's theorem to adjust the drift, see \cite{Hoek2009}. Second, we use a discrete approximation which is based on a specific strong approximation of Brownian motion by simple, symmetric random walks (the so-called ``twist and shrink'' method) and a discrete It\^o's formula, see \cite{Szab1996}. The bottle-neck of our proposed method is the application of Girsanov's theorem to adjust the drift: it requires that the corresponding Radon--Nikodym derivative be a martingale.

To specify our aim more exactly, consider a one-dimensional \emph{stochastic differential equation (SDE)}
\begin{eqnarray} \label{eq:SDE}
dX(t) &=& \mu(t,X(t)) \, dt + \sigma(t,X(t)) \, dB(t) \\
X(0) &=& x_0, \qquad 0 \le t \le T < \infty, \nonumber
\end{eqnarray}
where $B$ is Brownian motion on a complete probability space $(\Omega, \mathcal{F}, \mathbb{P})$, $B(0)=0$, $x_0 \in \mathbb{R}$, and $\mu, \sigma : [0, T]\times \mathbb{R} \to \mathbb{R}$ are the \emph{drift} and \emph{diffusion coefficients}, respectively. A solution $X$ is called an \emph{It\^o diffusion}.

We want to find a sequence of simple, symmetric random walks $(B_m(t))_{t \ge 0}$, $m=0,1,2, \dots$ and a deterministic function $\phi(t,x)$ -- which will be a solution of an ordinary differential equation adjoined to the SDE -- such that
\[
\lim_{m \to \infty} \sup_{0 \le t \le T} |B_m(t) - B(t)| = 0 \qquad \text{a.s.}
\]
and with $X_m(t) := \phi(t, B_m(t))$,
\[
\lim_{m \to \infty} \sup_{0 \le t \le T} |X_m(t) - X(t)| = 0 \qquad \text{a.s.},
\]
where $X(t) := \phi(t, B(t))$ is a \emph{weak} solution of the SDE.


\section{A method of finding a weak solution of an SDE} \label{sec:Meth}

First let us suppose that the ordinary differential equation (ODE):
\begin{equation}\label{eq:ODE}
\phi'_u(t,u) = \sigma(t, \phi(t,u)), \qquad \phi(t,0)=x_0
\end{equation}
has a solution over all values of $u \in \mathbb{R}$, with each value of the parameter $t \in [0,T]$, and its solution $\phi(t,u)$ is a $C^{1,2}$ function. (Lemma \ref{le:suffice} below gives a sufficient condition.)

Then define the stochastic process
\begin{equation}\label{eq:diff1}
X(t) := \phi(t, B(t)), \qquad 0 \le t \le T.
\end{equation}
By It\^o's formula,
\begin{equation}\label{eq:diff2}
dX(t) = \nu(t, B(t)) dt + \sigma(t, X(t)) dB(t),
\end{equation}
where
\begin{equation}\label{eq:nu}
\nu(t, B(t)) := \phi'_t(t, B(t)) + \frac12 \phi''_{uu}(t, B(t)),
\end{equation}
and $X(0)=x_0$. Thus $X$ is an \emph{It\^o process} with the correct diffusion coefficient and initial value, but its drift coefficient is not the one we wanted.

We now make a change of probability to adjust the drift in (\ref{eq:diff2}) to coincide with that in (\ref{eq:SDE}). By Girsanov's theorem, this can be achieved by introducing a new probability measure $\mathbb{Q}$ by setting
\begin{equation} \label{eq:Q}
\frac{d\mathbb{Q}}{d\mathbb{P}} = \Lambda(T)
:= \exp\left\{-\int_0^T \psi(s) dB(s) -\frac12 \int_0^T \psi^2(s) ds \right\},
\end{equation}
where
\begin{eqnarray}
\psi(t) &:=& \frac{\nu(t, B(t)) - \mu(t, X(t))}{\sigma(t, X(t))} =  \widetilde{\psi}(t, B(t)), \label{eq:psi} \\
\widetilde{\psi}(t, u) &:=& \frac{\phi'_t(t, u) + \frac12 \phi''_{uu}(t, u) - \mu(t,\phi(t,u))}{\sigma(t,\phi(t,u))} . \label{eq:psitilde}
\end{eqnarray}
To apply Girsanov's theorem, we have to assume that the process
\begin{equation}\label{eq:Lambda}
\Lambda(t) := \exp\left\{-\int_0^t \psi(s) dB(s) -\frac12 \int_0^t \psi^2(s) ds \right\} \qquad (0 \le t \le T)
\end{equation}
is a $\mathbb{P}$-martingale. Then
\begin{equation}\label{eq:W}
W(t) := B(t) + \int_0^t \psi(s) ds
\end{equation}
is a $\mathbb{Q}$-Brownian motion, and the process
\[
X(t) = \phi(t, B(t)) = \phi\left(t, W(t) - \int_0^t \psi(s) ds \right)
\]
is a solution of the SDE
\begin{equation} \label{eq:SDEQ}
dX(t) = \mu(t,X(t)) \, dt + \sigma(t,X(t)) \, dW(t), \qquad X(0) = x_0,
\end{equation}
on $[0, T]$. It means that $X$ is a weak solution of (\ref{eq:SDE}).

The next lemma gives a sufficient condition under which our method works and it uniquely leads to a weak solution $X(t)= \phi(t, B(t))$ $(0 \le t \le T)$.
\begin{lem} \label{le:suffice}
Consider the following three conditions: 
\begin{enumerate}[(i)]
  \item $\sigma \in C^{1,1}\left([0, T]\times \mathbb{R}\right)$ and $|\sigma(t,x)| \le K_0(1 + |x-x_0|)$ for any $(t,x) \in [0, T] \times \mathbb{R}$, where $K_0$ is a finite constant;
  \item $\mu(t,x)$ is continuous and  $\sigma(t,x) > 0$ for any $(t,x) \in [0, T]\times D$, where $D$ denotes the range of the solution $\phi$ over $[0, T]\times \mathbb{R}$;
  \item $|\widetilde{\psi}(t, u)| \le K (1 + |u|)$ for any $(t,u) \in [0, T] \times \mathbb{R}$, where $\widetilde{\psi}$ is defined by (\ref{eq:psitilde}) and $K$ is a finite constant.
\end{enumerate}

Then we have the following claims: 
\begin{enumerate}[(a)]
\item Under condition (i), the ODE (\ref{eq:ODE}) is uniquely solvable for any $(t,u) \in [0, T] \times \mathbb{R}$ and the solution $\phi$ belongs to $C^{1,2}\left([0, T]\times \mathbb{R}\right)$.
    
\item If in addition we assume conditions (ii) and (iii), then $\widetilde{\psi} \in C\left([0, T]\times \mathbb{R}\right)$ and the process $\Lambda(t)$ given in (\ref{eq:Lambda}) is a $\mathbb{P}$-martingale for $t \in [0, T]$.
\end{enumerate}
\end{lem}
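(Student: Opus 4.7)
For part (a) my plan is to invoke classical ODE theory. Fixing $t \in [0,T]$ as a parameter, the equation $\phi'_u(t,u)=\sigma(t,\phi(t,u))$ with $\phi(t,0)=x_0$ is an ODE in $u$ whose right-hand side is $C^1$, hence locally Lipschitz, in the state variable by condition (i); so Picard--Lindel\"of yields a unique local solution, and the linear growth bound $|\sigma(t,x)|\le K_0(1+|x-x_0|)$ rules out blow-up via a Gr\"onwall estimate, giving unique existence on all of $\mathbb{R}$. Since $\sigma$ is jointly $C^1$ in $(t,x)$, the standard theorem on $C^1$ dependence of ODE solutions on a parameter implies that $\phi'_t$ exists and is continuous on $[0,T]\times\mathbb{R}$. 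Differentiating the ODE once more in $u$ yields $\phi''_{uu}(t,u)=\sigma_x(t,\phi(t,u))\,\sigma(t,\phi(t,u))$, which is continuous, upgrading the regularity to $\phi \in C^{1,2}([0,T]\times\mathbb{R})$.

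The first assertion of (b) is then immediate from (\ref{eq:psitilde}): the numerator is continuous by part (a) and the continuity of $\mu$ from (ii), while the denominator $\sigma(t,\phi(t,u))$ is continuous and, since the range of $\phi$ lies in $D$, strictly positive by (ii). Hence $\widetilde{\psi}\in C([0,T]\times\mathbb{R})$.

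The core of the proof, and the step I expect to be the main obstacle, is the martingale property of $\Lambda$. Condition (iii) gives $|\psi(s)|=|\widetilde{\psi}(s,B(s))|\le K(1+|B(s)|)$, whence $\psi^2(s)\le 2K^2(1+B^2(s))$. A direct Novikov-style estimate $\mathbb{E}\exp\left(\tfrac12\int_0^T \psi^2(s)\,\di s\right)<\infty$ is not automatic here, because $\mathbb{E}\exp\left(c\,\sup_{0\le s\le T} B^2(s)\right)$ is finite only for sufficiently small $c$ and $T$. The standard remedy, which I would follow, is a Bene\v{s}-type bootstrap: choose a partition $0=t_0<t_1<\cdots<t_n=T$ whose mesh is small enough that, on each subinterval $[t_{i-1},t_i]$, the Gaussian exponential moments of the Brownian increments $B(s)-B(t_{i-1})$ render the conditional quantity $\mathbb{E}\left[\exp\left(\tfrac12\int_{t_{i-1}}^{t_i}\psi^2(s)\,\di s\right)\,\Big|\,\mathcal{F}_{t_{i-1}}\right]$ almost surely finite and suitably bounded. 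Applying a conditional form of Novikov's criterion piece by piece and iterating through the tower property then promotes the local exponential moments to the global martingale property of $\Lambda$ on $[0,T]$ (see, e.g., Karatzas--Shreve, Corollary 3.5.16). This splitting argument is the technical heart of the lemma; once it is in place, the Girsanov construction described in the preceding section produces the desired weak solution.
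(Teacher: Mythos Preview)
Your proposal is correct and follows essentially the same route as the paper: classical ODE theory (local Lipschitz from $\sigma\in C^{1,1}$, global existence via the linear growth bound, smooth parameter dependence for $\phi'_t$, and direct differentiation for $\phi''_{uu}$) for part (a), and the Bene\v{s} criterion (Karatzas--Shreve, Corollary 3.5.16) for the martingale property in (b). The only cosmetic difference is that the paper establishes global existence by an explicit chain of expanding rectangles whose widths sum to $\sum_{j\ge2}1/(jK_0)=\infty$, whereas you invoke a Gr\"onwall bound; both are standard implementations of the same linear-growth extension argument.
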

\begin{proof}
\emph{(a)} Because of symmetry, it is enough to consider the case when $u \ge 0$. Fix an arbitrary $t\in [0, T]$ and take first the rectangle $R_1=[0,h_1]\times[x_0-1, x_0+1]$ in the $(u,x)$ plane, where $h_1=1/(2K_0)$. By the linear growth assumption in \emph{(i)}, $|\sigma(t,x)| \le 2K_0$ on $R_1$, so by classical theorems, see e.g. \cite{Hur1958}, the initial value problem (\ref{eq:ODE}) has a unique solution $\phi(t,u)$ for $u \in [0,h_1]$ and its graph belongs to $R_1$.

Let $x_1:=\phi(t,h_1)$. Then take the rectangle $R_2 := [h_1, h_1+h_2]\times[x_0-2, x_0+2]$, where $h_2 = 1/(3K_0)$. Again, by the linear growth in \emph{(i)}, $|\sigma(t,x)| \le 3K_0$ on $R_2$, so the initial value problem (\ref{eq:ODE}) with new initial condition $\phi(t,h_1) = x_1$ has a unique solution $\phi(t,u)$ for $u \in [h_1,h_1+h_2]$ and its graph belongs to $R_2$.

Continuing this way, we get a sequence of adjoining rectangles $R_1, R_2, \dots$ with total horizontal length $\sum_{j=2}^{\infty} 1/(jK_0) = \infty$, and so a unique solution of (\ref{eq:ODE}) is obtained for all values $u \ge 0$.

We supposed that $\sigma(t,x)$ has continuous partials with respect to the variables $t$ and $x$ over $[0, T]\times\mathbb{R}$. Hence by \cite[Ch. 2, Theorem 10]{Hur1958} it follows that the solution $\phi(t,u)$ has a continuous partial derivative with respect to $t$ over $[0, T]\times\mathbb{R}$.

Differentiating (\ref{eq:ODE}) with respect to $u$, it also follows that
\[
\phi''_{uu}(t,u) = \sigma'_x(t, \phi(t,u)) \, \phi'_u(t,u) = \sigma'_x(t, \phi(t,u)) \, \sigma(t,\phi(t,u))
\]
exists and is continuous on $[0, T]\times\mathbb{R}$. This completes the proof of (a).

\emph{(b)} Conditions \emph{(i)} and \emph{(ii)} imply the continuity of $\widetilde{\psi}$. Condition \emph{(iii)} implies Bene\v{s}' condition, which in turn is a weakened version of Novikov's condition, see e.g. \cite[Ch. 3, 5.16 Corollary]{KS1999}.
\end{proof}


\section{Preliminaries of a discrete approximation} \label{sec:Pre}

\subsection{The ``twist and shrink'' approximation of Brownian motion}

A basic tool of the present paper is an elementary construction of Brownian motion. The specific
construction used in the sequel, taken from \cite{Szab1996}, is based on a nested sequence of simple,
symmetric random walks that uniformly converges to Brownian motion (BM = Wiener process) on bounded intervals with probability $1$. This will be called \emph{``twist and shrink''} construction. This method is a modification of the one given by Frank Knight in 1962 \cite{Kni1962}.

We summarize the major steps of the ``twist and shrink'' construction here. We start with \emph{a sequence of independent simple, symmetric random walks} (abbreviated: RW)
\[
S_m(0) = 0, \quad S_m(n) = \sum_{k=1}^{n} X_m(k) \quad (n \ge 1),
\]
based on an infinite matrix of independent and identically distributed random variables $X_m(k)$,
\[
\mathbb{P} \left\{ X_m(k)= \pm 1 \right\} = \frac12 \qquad (m\ge 0, k\ge 1),
\]
defined on the same complete probability space $(\Omega,\mathcal{F},\mathbb{P})$. (All stochastic processes in the sequel will be defined on this probability space.) Each random walk is a basis of an approximation of Brownian motion with a dyadic step size $\Delta t=2^{-2m}$ in time and a corresponding step size $\Delta x=2^{-m}$ in space.

The second step of the construction is \emph{twisting}. From the independent RW's we want to create
dependent ones so that after shrinking temporal and spatial step sizes, each consecutive RW becomes a
refinement of the previous one.  Since the spatial unit will be halved at each consecutive row, we
define stopping times by $T_m(0)=0$, and for $k\ge 0$,
\[
T_m(k+1)=\min \{n: n>T_m(k), |S_m(n)-S_m(T_m(k))|=2\} \qquad (m\ge 1)
\]
These are the random time instants when a RW visits even integers, different from the previous one.
After shrinking the spatial unit by half, a suitable modification of this RW will visit the same
integers in the same order as the previous RW. In other words, if $\widetilde{S}_{m-1}$ visits the integers $i_0=0, i_1, i_2, i_3, \dots$, $(i_j \ne i_{j+1})$, then we want that the twisted random walk $\widetilde{S}_m$ visit the even integers $2i_0, 2i_1, 2i_2, 2i_3$ in this order.

We operate here on each point $\omega\in\Omega$ of the
sample space separately, i.e. we fix a sample path of each RW. We define twisted RW's $\widetilde{S}_m$
recursively for $k=1,2,\dots$ using $\widetilde{S}_{m-1}$, starting with $\widetilde{S}_0(n)=S_0(n)$ $(n\ge
0)$ and $\widetilde{S}_m(0) = 0$ for any $m \ge 0$. With each fixed $m$ we proceed for $k=0,1,2,\dots$
successively, and for every $n$ in the corresponding bridge, $T_m(k)<n\le T_m(k+1)$. Each bridge is
flipped if its sign differs from the desired:
$\widetilde{X}_m(n) = \pm  X_m(n)$, depending on whether $S_m(T_m(k+1)) - S_m(T_m(k))
= 2\widetilde X_{m-1}(k+1)$ or not. So $\widetilde{S}_m(n)=\widetilde{S}_m(n-1)+\widetilde{X}_m(n)$.

Then $(\widetilde{S}_m(n))_{n\ge 0}$ is still a
simple symmetric RW \cite[Lemma 1]{Szab1996}. The twisted RW's have the desired refinement property:
\[
\widetilde{S}_{m+1}(T_{m+1}(k)) = 2 \widetilde{S}_{m}(k) \qquad (m\ge 0, k\ge 0).
\]

The third step of the RW construction is \emph{shrinking}. The sample paths of $\widetilde{S}_m(n)$ $(n\ge
0)$ can be extended to continuous functions by linear interpolation, this way one gets
$\widetilde{S}_m(t)$ $(t\ge 0)$ for real $t$. The $mth$ \emph{``twist and shrink'' RW} is defined by
\begin{equation}\label{eq:TAS}
\widetilde{B}_m(t)=2^{-m}\widetilde{S}_m(t2^{2m}).
\end{equation}
Then the \emph{refinement property} takes the form
\begin{equation}
\widetilde{B}_{m+1}\left(T_{m+1}(k)2^{-2(m+1)}\right) = \widetilde{B}_m \left( k2^{-2m}\right) \qquad (m\ge
0,k\ge 0). \label{eq:refin}
\end{equation}
Note that a refinement takes the same dyadic values in the same order as the previous shrunken walk,
but there is a \emph{time lag} in general:
\begin{equation} T_{m+1}(k)2^{-2(m+1)} - k2^{-2m} \ne 0 .
\label{eq:tlag}
\end{equation}

Now let us recall some important facts from \cite{Szab1996} and \cite{SzaSze2009} about the ``twist
and shrink'' construction that will be used in the sequel.

\begin{thmA} \label{th:Wiener}
The sequence of ``twist and shrink'' random walks ˜$B_m$ uniformly converges to Brownian motion $B$ on bounded intervals, almost surely. For all $T > 0$ fixed, as $m \to \infty$,
\[
\sup_{0 \le t \le T} |B(t) - \widetilde{B}_m(t)| = O\left(m^{\frac34} 2^{-\frac{m}{2}}\right) \qquad \text{a.s.}
\]
\end{thmA}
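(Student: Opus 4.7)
The strategy is to establish that the sequence $(\widetilde{B}_m)$ is almost surely uniformly Cauchy on $[0,T]$ with the claimed rate, so that the limit exists pathwise; by Donsker's theorem each $\widetilde{S}_m$ converges in distribution to a Wiener process, which forces the limit $B$ to be a Brownian motion. The heart of the matter is to bound $\sup_{0\le t\le T}|\widetilde{B}_{m+1}(t)-\widetilde{B}_m(t)|$ by $O(m^{3/4}2^{-m/2})$ almost surely, summing which (and invoking Borel--Cantelli to control a tail of bad $m$'s) yields the stated convergence rate against $B:=\lim_m \widetilde{B}_m$.

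For the per-level bound, I would fix $t\in[0,T]$ and let $k=\lfloor t\,2^{2m}\rfloor$, so $t_k:=k\,2^{-2m}$ is the nearest coarser dyadic grid point. Setting $t'_k:=T_{m+1}(k)\,2^{-2(m+1)}$, the refinement identity (\ref{eq:refin}) gives $\widetilde{B}_{m+1}(t'_k)=\widetilde{B}_m(t_k)$, so
\[
|\widetilde{B}_{m+1}(t)-\widetilde{B}_m(t)| \;\le\; |\widetilde{B}_{m+1}(t)-\widetilde{B}_{m+1}(t'_k)| \;+\; |\widetilde{B}_m(t_k)-\widetilde{B}_m(t)|,
\]
and the second term is trivially $O(2^{-m})$ by the linear interpolation. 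Thus everything reduces to controlling the oscillation of $\widetilde{B}_{m+1}$ on an interval of length $|t-t'_k|$, which is governed by the time lag (\ref{eq:tlag}).

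To bound the lag, I would write $T_{m+1}(k)=\sum_{j=1}^k\tau_j$, where $\tau_j$ are i.i.d.\ hitting times of $\pm2$ for the simple symmetric random walk, with $\mathbb{E}\tau_1=4$ and finite exponential moments in a neighborhood of $0$. A Bernstein/Kolmogorov maximal inequality then yields
\[
\max_{0\le k\le T\,2^{2m}}\bigl|T_{m+1}(k)-4k\bigr| \;=\; O\!\left(2^m\sqrt{m}\right)\quad\text{a.s.},
\]
so after dividing by $2^{2(m+1)}$ the lag satisfies $\max_k|t'_k-t_k|=O(\sqrt{m}\,2^{-m})$. For the oscillation of $\widetilde{B}_{m+1}$ on windows of this size, I would use a L\'evy-type modulus-of-continuity estimate for the piecewise-linearized simple random walk (equivalently, exponential tail bounds on binomial partial sums with step $2^{-(m+1)}$), obtaining
\[
\sup_{|s-s'|\le \sqrt{m}\,2^{-m}}\bigl|\widetilde{B}_{m+1}(s)-\widetilde{B}_{m+1}(s')\bigr| \;=\; O\!\left(\sqrt{\sqrt{m}\,2^{-m}\cdot m}\right)\;=\;O\!\left(m^{3/4}2^{-m/2}\right)\quad\text{a.s.},
\]
which is exactly the target rate. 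Summability in $m$ together with Borel--Cantelli upgrades the per-level bound to the uniform Cauchy property and then to the stated almost-sure rate for $\sup_t|B(t)-\widetilde{B}_m(t)|$.

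The main obstacle is calibrating the two estimates so that they combine to yield precisely $m^{3/4}2^{-m/2}$: the factor $\sqrt{m}$ in the lag (from the maximum of $T 2^{2m}$ partial-sum deviations) feeds into the square-root of the modulus of continuity, which itself carries an extra $\sqrt{\log(1/h)}\sim\sqrt{m}$, and the exponents must balance to $3/4$. A secondary subtlety is handling non-grid $t$ and showing that the interpolation error does not dominate; this is mild because the linear-interpolation error is of order $2^{-m}$, already negligible compared with $m^{3/4}2^{-m/2}$. These ingredients are the substance of \cite{Szab1996} and \cite{SzaSze2009}, so the proof in the paper can largely quote those sources.
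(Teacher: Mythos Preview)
The paper does not prove Theorem~\ref{th:Wiener} at all: it is stated as a recalled fact from \cite{Szab1996} and \cite{SzaSze2009} (``Now let us recall some important facts\ldots''), with no proof or sketch given in the present paper. So there is nothing to compare your argument against here; the authors simply cite the result.

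That said, your outline matches the strategy of the original source quite closely: decompose $|\widetilde{B}_{m+1}-\widetilde{B}_m|$ via the refinement identity, control the time lag $T_{m+1}(k)2^{-2(m+1)}-k2^{-2m}$ by large-deviation/maximal inequalities for the i.i.d.\ first-passage times $\tau_j$, and then feed this into a modulus-of-continuity bound for the shrunken walk. Your calibration remark---that the $\sqrt{m}$ from the maximal lag combines with the $\sqrt{\log(1/h)}$ from the modulus to produce the exponent $3/4$---is exactly the mechanism. The one point worth tightening is the identification of the limit: invoking Donsker's theorem gives only that each fixed $\widetilde{B}_m$ (rescaled) is close in law to a Wiener process, not that the a.s.\ limit of the \emph{nested} sequence is one. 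In \cite{Szab1996} this is handled directly by showing the a.s.\ limit has independent Gaussian increments (or equivalently by checking the finite-dimensional distributions of the limit), rather than by a weak-convergence argument; you may want to phrase that step accordingly.
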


Conversely, with a given Brownian motion $B$, one can define the stopping times which yield the
\emph{Skorohod embedded RW's} $B_m(k2^{-2m})$ into $B$. For every $m\ge 0$ let $s_m(0)=0$ and
\begin{equation} \label{eq:Skor1}
s_m(k+1)=\inf{}\{s: s > s_m(k), |B(s)-B(s_m(k))|=2^{-m}\} \qquad (k \ge 0).
\end{equation}
With these stopping times the embedded dyadic walks by definition are
\begin{equation} \label{eq:Skor2}
B_m(k2^{-2m}) = B(s_m(k)) \qquad (m\ge 0, k\ge 0).
\end{equation}
This definition of $B_m$ can be extended to any real $t \ge 0$ by pathwise linear interpolation.

If Brownian motion is built by the ``twist and shrink'' construction described above using a sequence
$(\widetilde{B}_m)$ of nested RW's and then one constructs the Skorohod embedded RW's $(B_m)$, it is
natural to ask about their relationship. It is important that they are asymptotically
equivalent, so Theorem \ref{th:Wiener} is valid for $(B_m)$ as well, in any dimension $d$, cf. \cite{Szab1996} and \cite{Szab2012}. For all $T > 0$ fixed, as $m \to \infty$,
\begin{equation}\label{eq:Skor}
\sup_{0 \le t \le T} |B(t) - B_m(t)| = O\left(m^{\frac34} 2^{-\frac{m}{2}}\right) \qquad \text{a.s.}
\end{equation}

In general, $(\widetilde{B}_m)$ is more useful when someone wants to generate
stochastic processes from scratch, while $(B_m)$ is more advantageous when someone needs discrete
approximations of given processes.


\subsection{A discrete It\^o's formula}

The second major tool used in this paper is \emph{a discrete It\^o's formula}. It is interesting that one can give discrete versions of It\^o's formulas which are purely algebraic identities, not assigning any probabilities to the terms. Despite
this, the usual It\^o's formulas follow fairly easily from the discrete counterparts in a proper probability setting. Apparently, the first such formula was given by Kudzma in 1982
\cite{Kud1982}. The elementary algebraic approach used in the present paper is different from that; it
was introduced by the second author in 1989 \cite{Szab1990}.

Fix an initial point $a \in \mathbb{R}$ and step-size (mesh) $h > 0$. Consider the grid $\mathcal{G}(a,h)
:= a + h \mathbb{Z}$, and let $f : \mathcal{G}(a,h) \rightarrow \mathbb{R}$ be a function on this grid. Take an arbitrary broken line (\emph{a discrete path}) $\gamma $ that goes through finitely many (not
necessarily distinct) oriented edges between adjoining vertices of the grid. A typical such edge is
$[x, x + \mu h]$, where $x \in \mathcal{G}(a,h)$ and $\mu = \pm 1$. (The order of the two vertices is important!) A discrete path $\gamma $ is a formal sum of such oriented edges (that is, a 1-chain): $\gamma = \sum_{r=0}^{n-1} [x_r, x_r + \mu_r h]$.

By definition, the corresponding \emph{discrete path integral} or \emph{trapezoidal sum} of $f$ over
$\gamma $ is defined as
\begin{multline*}
T_{\gamma } \, f \, h := \frac{h}{2} \sum_{r=1}^n \mu_r  \left(f(x_r) +
f(x_r + \mu_r h)\right) \\
= h \, \mathrm{sgn}(x_n-x_0) \left\{\frac12 f(x_0) + \frac12 f(x_n) + \sum_{j=1}^{|x_n - x_0|/h - 1} f\left(x_0 + j \, h \, \mathrm{sgn}(x_n - x_0)\right)\right\} \\
=: T_{x=x_0}^{x_{n}} f(x) h.
\end{multline*}

The above definition of a trapezoidal sum shows that the orientation of an edge is defined by the
order of its two vertices: it is positive if the edge goes increasingly and negative
in the opposite case. If $\gamma = \emptyset$ (or $x_0=x_{n}$), we define $T_{\gamma } f h = 0$. It is also clear that, like in the case of a conservative vector field, the sum depends only on the initial and end points, does not otherwise depend on the path.

Versions of the following discrete It\^o's formula (which is a simple algebraic identity) already appeared in \cite[Proposition 3]{Szab1990}, \cite[Lemma 11]{Szab1996} and \cite[Lemma 1]{Szab2012}.

\begin{lemA}\label{le:discrete_Ito}
Take $a \in \mathbb{R}$, step $h > 0$, and a time-dependent function $f : h^2 \mathbb{Z}_+ \times \mathcal{G}(a,h) \rightarrow \mathbb{R}$. Consider a
sequence $(\xi_r)_{r \ge 1}$, where $\xi_{r} = \pm 1$. Define partial sums
$S_0=a $, $S_n = a+h(\xi_1+\cdots +\xi_n)$ ($n \ge 1$) and discrete time instants $t_r = r h^2$ ($0 \le r
\le n$). Assume that the steps of $(S_n)$ are performed in time steps $h^2$. Then the following
equalities hold:
\begin{multline} \label{eq:disc_Strat}
T_{x=S_0}^{S_n} f(t_n,x) h
= \sum_{r=1}^{n}  T_{x=S_0}^{S_r} \frac{f(t_r,x) - f(t_{r-1},x)}{h^2} h h^2  \\
+ \sum_{r=1}^{n} \frac{f \left(t_{r-1}, S_{r-1} \right) + f \left(t_{r-1}, S_{r} \right)}{2} h \xi_r
\end{multline}
(discrete Stratonovich formula). Alternatively,
\begin{multline} \label{eq:disc_Ito}
T_{x=S_0}^{S_n} f(t_n,x) h
= \sum_{r=1}^{n}  T_{x=S_0}^{S_r} \frac{f(t_r,x) - f(t_{r-1},x)}{h^2} h h^2   \\
+ \sum_{r=1}^{n} f \left(t_{r-1}, S_{r-1} \right) h \xi_r
+  \frac12 \sum_{r=1}^n \frac{f \left(t_{r-1}, S_{r} \right) - f \left(t_{r-1}, S_{r-1} \right)} {h \xi_r} h^2
\end{multline}
(discrete It\^o's formula).
\end{lemA}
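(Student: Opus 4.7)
The cleanest route is telescoping. Set $F_n := T_{x=S_0}^{S_n} f(t_n,x)\,h$, so both claims will follow from a one-step identity for $F_n - F_{n-1}$ summed from $r=1$ to $n$ with $F_0 = 0$. First I would split
\[
F_n - F_{n-1} = T_{x=S_0}^{S_n} \bigl[f(t_n,x) - f(t_{n-1},x)\bigr] h \;+\; \Bigl( T_{x=S_0}^{S_n} f(t_{n-1},x)\,h - T_{x=S_0}^{S_{n-1}} f(t_{n-1},x)\,h \Bigr).
\]
The first term is just a single summand of the discrete ``time derivative'' sum after multiplying and dividing by $h^2$. For the second term I would invoke the path-independence property of trapezoidal sums emphasized in the statement (``the sum depends only on the initial and end points''), which lets me rewrite the difference as $T_{x=S_{n-1}}^{S_n} f(t_{n-1},x)\,h$. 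Since $S_n = S_{n-1} + h\xi_n$ differs from $S_{n-1}$ by exactly one edge $[S_{n-1}, S_{n-1}+h\xi_n]$, the definition of the trapezoidal sum over a single oriented edge gives this contribution directly as
\[
\frac{h}{2}\,\xi_n\bigl(f(t_{n-1},S_{n-1}) + f(t_{n-1},S_n)\bigr).
\]

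Summing these one-step identities over $r=1,\dots,n$ collapses the telescoping sum on the left to $F_n$ and assembles the right-hand side of (\ref{eq:disc_Strat}) term by term. This proves the Stratonovich form. The Itô form is then a purely algebraic rewrite using $\xi_r = \pm 1$, hence $\xi_r^2 = 1$ and $1/\xi_r = \xi_r$: split the symmetric average as
\[
\tfrac{1}{2}\bigl(f(t_{r-1},S_{r-1}) + f(t_{r-1},S_r)\bigr) = f(t_{r-1},S_{r-1}) + \tfrac{1}{2}\bigl(f(t_{r-1},S_r) - f(t_{r-1},S_{r-1})\bigr),
\]
multiply by $h\xi_r$, and in the correction term replace $h\xi_r$ by $h^2/(h\xi_r)$ to obtain the It\^o correction
$\tfrac12 \bigl(f(t_{r-1},S_r)-f(t_{r-1},S_{r-1})\bigr)/(h\xi_r)\cdot h^2$ displayed in (\ref{eq:disc_Ito}).

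I do not expect a genuine obstacle: everything is a finite algebraic identity, and the induction is the standard telescoping argument. The only point where one must be careful is the use of path-independence of the trapezoidal sum, which relies on the sign convention built into the definition of $T_\gamma f h$ (oriented edges, with $\mu_r = \pm 1$). One should state once, at the beginning, that this orientation convention makes $T_{x=S_0}^{S_n} = T_{x=S_0}^{S_{n-1}} + T_{x=S_{n-1}}^{S_n}$ regardless of whether $S_n$ lies above or below $S_{n-1}$; after that, the proof is essentially a one-line induction plus the $\xi_r^2 = 1$ identity.
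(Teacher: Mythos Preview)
Your argument is correct. The paper itself does not supply an in-text proof of this lemma; it simply records the identity and cites \cite{Szab1990}, \cite{Szab1996}, \cite{Szab2012} where versions of it already appear, noting that it is ``a simple algebraic identity.'' Your telescoping decomposition
\[
F_r - F_{r-1} = T_{x=S_0}^{S_r}\bigl[f(t_r,x)-f(t_{r-1},x)\bigr]h \;+\; T_{x=S_{r-1}}^{S_r} f(t_{r-1},x)\,h
\]
is exactly the standard one-step identity behind those references, and your passage from the Stratonovich form to the It\^o form via $\xi_r^2=1$ is the intended algebraic rewrite. The one point you flagged, additivity $T_{x=S_0}^{S_n} = T_{x=S_0}^{S_{n-1}} + T_{x=S_{n-1}}^{S_n}$, is indeed the only thing to justify, and it follows directly from the endpoint-only dependence stated just before the lemma (concatenate any path from $S_0$ to $S_{n-1}$ with the single edge $[S_{n-1},S_n]$). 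Nothing is missing.
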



Let us apply now the discrete It\^o's formula (\ref{eq:disc_Ito}) to (the $x$ partial of) a
random, time-dependent function $g : \Omega \times \mathbb{R}_+ \times \mathbb{R} \to
\mathbb{R}$, $g(\omega,t,x)$, which is measurable in $\omega$ for all $(t,x)$, and is $C^{1,2}$ in
$(t,x)$ for almost all $\omega $.

Start with a Brownian motion $(B(t))_{t \ge 0}$ shifted so that $B(0)=a$. Then take Skorohod embedded random walks
$(B_m(t))_{t \ge 0}$, $B_m(r2^{-2m})=B(s_m(r))$ in (\ref{eq:disc_Ito}).
That is, let $S_r:=B_m(r2^{-2m})$ and
\[
\xi_r = \xi_m(r) := 2^m \left\{B_m(r2^{-2m}) - B_m((r-1)2^{-2m}) \right\} .
\]
Then $(\xi_m(r))_{r=1}^{\infty}$ is an independent, $(\pm 1)$, symmetric coin
tossing sequence. Define \emph{stochastic sums} by
\begin{equation} \label{eq:stocsum1}
\left(f(\omega, s, B) \cdot B \right)^m_t := \sum_{r=1}^{n} f\left(\omega , t_{r-1},
B_m(t_{r-1})\right) \: 2^{-m} \xi_m(r) ,
\end{equation}
where $f=g'_x$, $t_r := r 2^{-2m}$ and $n := \lfloor t 2^{2m} \rfloor$. (Of course, $B_m$, $\xi_m$, and $B$ all depend on $\omega $, but this dependence is not shown here and below, to simplify the notation.)

Now the discrete It\^o's formula (\ref{eq:disc_Ito}) can be written as
\begin{eqnarray} \label{eq:disc_Ito1}
\lefteqn{T_{x=a}^{B_m(t_{n})} f(\omega , t_{n}, x) 2^{-m}}
\\
& = & \sum_{r=1}^{n} T_{x=a}^{B_m(t_r)} \frac{f(\omega , t_r, x)
- f(\omega , t_{r-1}, x)}{2^{-2m}} 2^{-m} 2^{-2m}  \nonumber \\
& + & \sum_{r=1}^{n} f\left(\omega , t_{r-1},
B_m(t_{r-1})\right) \: 2^{-m} \xi_m(r) \nonumber \\
& + &  \frac12 \sum_{r=1}^{n} \frac{ f\left(\omega , t_{r-1}, B_m(t_r)\right) - f\left(\omega , t_{r-1}, B_m(t_{r-1}) \right)}
{2^{-m} \xi_m(r)} 2^{-2m} . \nonumber
\end{eqnarray}

One can show, cf. \cite[Theorem 6]{Szab1996} and \cite[Theorem 1]{Szab2012}, that each term in this formula almost surely uniformly converges to the corresponding term of the It\^o's formula, on any bounded
time interval. In particular, the stochastic sum almost surely uniformly converges to
the stochastic integral, on any bounded time interval.

\begin{thmA} \label{th:Ito}
Suppose $g(\omega,t,x)$ is measurable in $\omega$ for all $(t,x)$, and is $C^{1,2}$ in $(t,x)$ for
almost every $\omega $. Let $f=g'_x$. Taking Brownian motion $B$, for each $m$ define the Skorohod
embedded random walk $B_m$. Then for arbitrary $T>0$,
\[
\sup_{t\in[0,T]}\left|\left(f(\omega, s, B) \cdot B \right)_t^m - \int_0^t f(\omega , s,
B(s)) dB(s) \right| \rightarrow 0
\]
almost surely as $m \to \infty$, and for any $t \ge 0$ we obtain the It\^o's formula as an
almost sure uniform limit on any bounded interval $[0,T]$ of the discrete formula (\ref{eq:disc_Ito}), term-by-term:
\begin{eqnarray} \label{eq:Ito_form}
\lefteqn{g(\omega , t, B(t)) - g(\omega , 0, B(0)) = \int_{0}^{t}
g'_t(\omega , s, B(s)) ds} \\
& & + \int_{0}^{t} g'_x(\omega , s, B(s)) dB(s) + \frac12 \int_{0}^{t} g''_{xx}(\omega , s, B(s)) ds . \nonumber
\end{eqnarray}
\end{thmA}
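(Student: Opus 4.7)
The plan is to work pathwise on a full-measure event $\Omega_0$ on which (a) the twist-and-shrink estimate (\ref{eq:Skor}) holds, so $\sup_{0\le t\le T}|B(t) - B_m(t)| = O(m^{3/4}2^{-m/2})$, and (b) $g(\omega,\cdot,\cdot)$ is $C^{1,2}$ on $[0,T]\times\mathbb{R}$. Fix such an $\omega$. Then $B$ is bounded on $[0,T]$ by some $M_0$, and for all $m$ large enough the paths of $B_m$ lie in the compact set $K=[0,T]\times[-M_0-1,M_0+1]$ on which $g$, $g'_t$, $f=g'_x$, and $g''_{xx}$ are uniformly continuous and bounded. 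This compactness is what drives the uniform convergence of each term in (\ref{eq:disc_Ito1}).

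For the trapezoidal sums appearing on the LHS and in the first term of the RHS, I would use that $g(\omega,t,\cdot)$ is an antiderivative of $f(\omega,t,\cdot)$ together with the standard trapezoidal-rule error bound $O(2^{-m})$, valid uniformly on $K$ by the continuity of $g''_{xx}$. This rewrites the LHS as $g(\omega,t_n,B_m(t_n)) - g(\omega,t_n,a) + o(1)$; similarly, the inner trapezoidal sum in the first RHS term telescopes, modulo negligible error, into a Riemann sum for $\int_0^t\{g'_t(\omega,s,B(s)) - g'_t(\omega,s,a)\}\,ds$, with a leftover boundary contribution $g(\omega,t,a) - g(\omega,0,a)$ that cancels against the corresponding piece inherited from the LHS. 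The algebraic combination produces exactly $g(\omega,t,B(t)) - g(\omega,0,B(0)) - \int_0^t g'_t(\omega,s,B(s))\,ds$, matching the left-hand side and the time-derivative term of (\ref{eq:Ito_form}).

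For the third RHS term, the difference quotient in the summand equals $g''_{xx}(\omega,t_{r-1},\eta_r)$ for some $\eta_r$ between $B_m(t_{r-1})$ and $B_m(t_r)$, by the mean-value theorem applied to $f=g'_x$. The term is therefore a Riemann-type sum $\tfrac12\sum_r g''_{xx}(\omega,t_{r-1},\eta_r)\,2^{-2m}$, which converges uniformly on $[0,T]$ to $\tfrac12\int_0^t g''_{xx}(\omega,s,B(s))\,ds$ since $|\eta_r - B(s)| \le \sup_{t\in[0,T]}|B_m(t)-B(t)| + 2^{-m} \to 0$ and $g''_{xx}$ is uniformly continuous on $K$.

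The main obstacle is the middle, stochastic RHS term, where pathwise estimates are no longer sufficient because the It\^o integral only makes sense as an $L^2$-limit. The strategy established in \cite[Theorem 6]{Szab1996} and \cite[Theorem 1]{Szab2012} is to exploit the Skorohod embedding directly: writing $B_m(t_{r-1}) = B(s_m(r-1))$ by (\ref{eq:Skor1})--(\ref{eq:Skor2}), the stochastic sum becomes $\sum_r f(\omega,t_{r-1},B(s_m(r-1)))\,[B(s_m(r))-B(s_m(r-1))]$, a Riemann-type approximation of the It\^o integral $\int_0^t f(\omega,s,B(s))\,dB(s)$ along the sequence of Skorohod stopping times. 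Almost sure uniform convergence then rests on two ingredients: uniform control of $|s_m(r) - r2^{-2m}|$ on $[0,T]$, which lets one replace $s_m(r-1)$ by $t_{r-1}$ inside the integrand via uniform continuity of $f$ on $K$; and a standard maximal inequality to handle the tail $\int_{s_m(n)}^{t} f(\omega,s,B(s))\,dB(s)$ between the last embedded time and $t$. For this step I would invoke the cited theorems directly rather than reproducing the full argument.
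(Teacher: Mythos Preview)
The paper does not actually prove this statement: Theorem~\ref{th:Ito} is a quoted result (hence the alphabetic numbering via the \texttt{thmA} environment), taken verbatim from \cite[Theorem 6]{Szab1996} and \cite[Theorem 1]{Szab2012}. So there is no ``paper's own proof'' to compare your proposal against; the paper simply cites those references and moves on.

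That said, your sketch is broadly sound for the three ``deterministic'' pieces. The pathwise compactness argument, the trapezoidal-rule error bound converting the left-hand side into $g(\omega,t_n,B_m(t_n))-g(\omega,t_n,a)+O(2^{-m})$, and the mean-value treatment of the second-derivative term all go through under the stated $C^{1,2}$ hypothesis. One point to watch in the first RHS term: you need the trapezoidal error to be controlled \emph{after} dividing by $2^{-2m}$ and summing $O(2^{2m})$ terms, and you are only given $g\in C^{1,2}$, not $g''_{xt}$ continuous; the clean way around this is to first replace each inner trapezoidal sum by the exact integral $[g(t_r,B_m(t_r))-g(t_r,a)]-[g(t_{r-1},B_m(t_r))-g(t_{r-1},a)]$ plus error, and only then apply the mean-value theorem in $t$ to the differences of $g$ itself.

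The genuine gap is in the stochastic term. You correctly identify it as the crux, outline the Skorohod-embedding rewriting, and then write ``for this step I would invoke the cited theorems directly.'' But those cited theorems \emph{are} Theorem~\ref{th:Ito}: the convergence of the stochastic sum to the It\^o integral is precisely the nontrivial content of \cite[Theorem 6]{Szab1996} and \cite[Theorem 1]{Szab2012}, and the remaining terms are handled there as corollaries. So your proposal is circular at exactly the point where the real work lies. If you want a self-contained proof you must carry out the time-lag estimate $\sup_k|s_m(k)-k2^{-2m}|\to 0$ (this is where the $m^{3/4}2^{-m/2}$ rate originates) and the maximal-inequality argument yourself; otherwise, citing those references is already what the paper does, and no further proof is expected here.
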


The reader may have noticed in the statement of Theorem \ref{th:Ito} that the usual condition
in It\^o's formulae that the random function $g(\omega , t, x)$ be adapted to the filtration of
Brownian motion $B$ in the variable $\omega$, was not needed: the assumed smoothness of $g$ together with the pathwise, integration by parts stochastic integration technique made this assumption unnecessary.


\section{A sequence of discrete approximations of a diffusion} \label{sec:Disc}

Based on Section \ref{sec:Meth}, we consider the following three assumptions:

\emph{Assumption 1:} $\mu$ and $\sigma$ are continuous and $\sigma > 0$ on $[0, T]\times D$, where $D$ denotes the range of the unique solution $\phi$ of ODE (\ref{eq:ODE}).

\emph{Assumption 2:} $\phi \in C^{1,2}([0, T] \times \mathbb{R})$.

\emph{Assumption 3:} $\Lambda(t)$ is a $\mathbb{P}$-martingale for $t \in [0, T]$.

\medskip

Assumptions 1 and 2 will always be used below, while we will need Assumption 3 as well in several instances.  
Then $X(t) = \phi(t, B(t))$, $(0 \le t\le T)$ is well-defined and is a weak solution of the SDE (\ref{eq:SDE}). Lemma \ref{le:suffice} above contain a sufficient condition for this.

Our task is now twofold: first we introduce discrete approximations of $X(t)$, then we discuss approximations of the probability measure $\mathbb{Q}$ defined by (\ref{eq:Q}).
So first we introduce a sequence of approximations, indexed by $m=0,1,2,\dots$, by replacing Brownian motion $B$ with its Skorohod embedded random walks $B_m$:
\begin{equation} \label{eq:X_m}
X_m(t) := \phi(t, B_m(t)) \qquad (0 \le t \le T, m=0,1,2,\dots).
\end{equation}
(Though we use continuous time, remember that $B_m$ is essentially a discrete random walk. We interpolated it piecewise linearly in time for sake of convenience.)

\begin{thm} \label{th:Xconv}
Under Assumptions 1 and 2 above, let $X(t) = \phi(t, B(t))$, $(0 \le t\le T)$ be the weak solution of the SDE (\ref{eq:SDE}) as discussed in Section \ref{sec:Meth} and take the approximations $X_m(t)$ defined by (\ref{eq:X_m}).
Then
\[
\sup_{0 \le t \le T} |X_m(t) - X(t)| = O\left(m^{\frac34} 2^{-\frac{m}{2}}\right) \qquad \text{a.s.}
\]
\end{thm}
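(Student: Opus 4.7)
The plan is to reduce the statement directly to the rate of convergence (\ref{eq:Skor}) of the Skorohod embedded walks, using the fact that $\phi$ is $C^{1,2}$ and that Brownian paths are bounded on $[0,T]$ almost surely. In other words, I will prove a pathwise Lipschitz estimate
\[
|X_m(t,\omega) - X(t,\omega)| \le L(\omega)\, |B_m(t,\omega) - B(t,\omega)|
\]
valid uniformly in $t \in [0,T]$ for almost every $\omega$ and all sufficiently large $m$, and then multiply by the rate from (\ref{eq:Skor}).

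First I would fix $\omega$ outside the probability-zero set on which (\ref{eq:Skor}) and the continuity/boundedness of the Brownian path $B(\cdot,\omega)$ fail. Set $M(\omega) := \sup_{0 \le t \le T} |B(t,\omega)| < \infty$. Since $\sup_{0 \le t \le T}|B_m(t,\omega) - B(t,\omega)| \to 0$ by (\ref{eq:Skor}), there exists $m_0(\omega)$ such that for all $m \ge m_0(\omega)$, $\sup_{0 \le t \le T}|B_m(t,\omega)| \le M(\omega) + 1 =: R(\omega)$. Thus for $m \ge m_0(\omega)$ both paths $B(\cdot,\omega)$ and $B_m(\cdot,\omega)$ take values in the compact set $[-R(\omega), R(\omega)]$ on $[0,T]$.

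Next, since Assumption 2 gives $\phi \in C^{1,2}([0,T] \times \mathbb{R})$, the partial $\phi'_u$ is continuous on the compact rectangle $[0,T] \times [-R(\omega), R(\omega)]$ and therefore bounded there by some $L(\omega) < \infty$. The mean value theorem in the spatial variable yields
\[
|\phi(t, B_m(t,\omega)) - \phi(t, B(t,\omega))| \le L(\omega)\, |B_m(t,\omega) - B(t,\omega)|
\]
for every $t \in [0,T]$ and every $m \ge m_0(\omega)$. Taking the supremum over $t$ and invoking (\ref{eq:Skor}),
\[
\sup_{0 \le t \le T} |X_m(t,\omega) - X(t,\omega)| \le L(\omega) \cdot O\!\left(m^{3/4} 2^{-m/2}\right),
\]
which is exactly the desired $O(m^{3/4} 2^{-m/2})$ bound almost surely (the constant $L(\omega)$ being absorbed into the implicit pathwise constant of the big-$O$).

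There is no genuine obstacle here: the argument is essentially a deterministic chain rule/MVT composed with the already-known rate (\ref{eq:Skor}). The only point that requires a little care is that $\phi'_u$ need not be globally bounded (Assumption 1 only imposes linear growth on $\sigma$, so $\phi'_u = \sigma(t,\phi)$ could grow), which is why the Lipschitz constant $L(\omega)$ is taken pathwise on the random-but-finite interval $[-R(\omega),R(\omega)]$ rather than globally. This is harmless for an almost-sure $O(\cdot)$ statement.
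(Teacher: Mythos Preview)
Your proposal is correct and follows essentially the same approach as the paper: fix $\omega$ on the full-measure set where (\ref{eq:Skor}) holds, trap the ranges of $B(\cdot,\omega)$ and the $B_m(\cdot,\omega)$ in a common compact set, bound $|\phi'_u|$ there by a pathwise constant $L(\omega)$, and apply the mean value theorem together with (\ref{eq:Skor}). The only cosmetic difference is that the paper asserts a single compact set $K_\omega$ containing the ranges of all $B_m$ (which is justified since $\sup_m\sup_t|B_m-B|<\infty$ by convergence), whereas you work with $m\ge m_0(\omega)$; this is immaterial for an almost-sure $O(\cdot)$ statement.
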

\begin{proof}
Fix an $\omega$ in the probability 1 subset where (\ref{eq:Skor}) holds. Since $B(\omega, t)$ is continuous on $[0,T]$, its range is compact. Also, by the uniform convergence in (\ref{eq:Skor}), there is a compact set $K_{\omega} \subset \mathbb{R}$ that contains the range of $B(\omega, t)$ and that of each $B_m(\omega, t)$ over $[0, T]$, $(m=0,1,2,\dots)$. Take $L_{\omega} := \sup_{(t,u) \in [0,T]\times K_{\omega}} |\phi'_u(t,u)| < \infty$. Then
\begin{multline*}
\sup_{0 \le t \le T} |X_m(\omega, t) - X(\omega, t)| = \sup_{0 \le t \le T} |\phi(t, B_m(\omega, t)) - \phi(t,B(\omega, t))| \\
\le L_{\omega} \, |B_m(\omega, t) - B(\omega, t)| =  O\left(m^{\frac34} 2^{-\frac{m}{2}}\right) \end{multline*}
by (\ref{eq:Skor}).
\end{proof}

Now under Assumptions 1,2 and 3 we are going to determine a sequence of discrete approximations of the probability measure $\mathbb{Q}$, defined by(\ref{eq:Q}). For this, we use the drift term in a discrete It\^o's formula. Define
\begin{equation}\label{eq:appr1}
X_m^*(t_r) := x_0 + T_{u=0}^{B_m(t_r)} \phi'_u(t_r, u) 2^{-m} = X_m(t_r) + O(2^{-m}) ,
\end{equation}
where $t_r = r 2^{-2m} \in [0,T]$. The second equality follows from (\ref{eq:Skor}) and from the continuity of $\phi'_u$, see Assumption 2 above. Note that the error term $O(2^{-m})$ may depend on $\omega$.

Apply the discrete It\^o's formula (\ref{eq:disc_Ito1}) to $X_m^*$ with $f:=\phi'_u$:
\begin{multline}\label{eq:disc_Ito2}
X_m^*(t_n) - x_0 = T_{u=0}^{B_m(t_n)} \phi'_u(t_n, u) 2^{-m} \\
=  \sum_{r=1}^{n} \phi'_u\left(t_{r-1}, B_m(t_{r-1})\right) \: \left(B_m(t_r) - B_m(t_{r-1})\right)  \\
+  \sum_{r=1}^{n} \left\{T_{u=0}^{B_m(t_r)} \frac{\phi'_u(t_r, u)
- \phi'_u(t_{r-1}, u)}{2^{-2m}} 2^{-m} \right. \\
\left. +  \frac12 \frac{ \phi'_u\left(t_{r-1}, B_m(t_r)\right) - \phi'_u\left(t_{r-1}, B_m(t_{r-1}) \right)} {B_m(t_r) - B_m(t_{r-1})} \right\} 2^{-2m} .
\end{multline}
By Theorem \ref{th:Ito}, the terms of this formula a.s. converge, uniformly on $[0,T]$, to the corresponding terms in the continuous It\^o's formula as $m \to \infty$.

In particular, the stochastic sum on the right hand side tends to the stochastic integral, that is, to the diffusion term as $m \to \infty$:
\begin{multline}\label{eq:appr3}
\sum_{r=1}^{\lfloor t 2^{2m}\rfloor} \phi'_u\left(t_{r-1}, B_m(t_{r-1})\right) \: \left(B_m(t_r) - B_m(t_{r-1})\right) \to \int_{0}^{t} \phi'_u(s, B(s))  dB(s) \\
=  \int_{0}^{t} \sigma(s, X(s)) dB(s),
\end{multline}
since by (\ref{eq:ODE}), $\phi'_u(s, B(s)) = \sigma(s, \phi(s, B(s))) = \sigma(s, X(s))$.

It is important that the last sum on the right hand side tends to the drift term, that is, to the integral of $\nu(t, B(t))$ in (\ref{eq:diff2}) as $m \to \infty$:
\begin{multline}\label{eq:appr4}
\sum_{r=1}^{\lfloor t 2^{2m}\rfloor} \nu_m(t_{r}, B_m(t_{r-1}), B_m(t_r)) \, 2^{-2m} \\
:= \sum_{r=1}^{\lfloor t 2^{2m}\rfloor} \left\{T_{u=0}^{B_m(t_r)} \frac{\phi'_u(t_r, u)
- \phi'_u(t_{r-1}, u)}{2^{-2m}} 2^{-m} \right. \\
\left. +  \frac12 \frac{ \phi'_u\left(t_{r-1}, B_m(t_r)\right) - \phi'_u\left(t_{r-1}, B_m(t_{r-1}) \right)} {B_m(t_r) - B_m(t_{r-1})} \right\} 2^{-2m} \\
\longrightarrow \int_{0}^{t} \nu(s, B(s)) ds = \int_{0}^{t} \left\{\phi'_t(s, B(s)) + \frac12 \phi''_{uu}(s, B(s)) \right\} ds
\end{multline}

The next lemma gives a local version of (\ref{eq:appr4}).
\begin{lem} \label{le:locdrift}
In addition to Assumptions 1 and 2 above, let us assume that $\sigma(t,x) \in C^{1,1}([0, T] \times \mathbb{R})$. Then almost surely, as $m \to \infty$, we have 
\begin{multline} \label{eq:nu_m1}
\sup_{0 \le t_r \le T} |\nu_m(t_r, B_m(t_{r-1}), B_m(t_r)) - \nu(t_r, B_m(t_r))| \\
:= \sup_{0 \le t_r \le T} \left| T_{u=0}^{B_m(t_r)} \frac{\phi'_u(t_r, u) - \phi'_u(t_{r-1}, u)}{2^{-2m}} 2^{-m} \right. \\
\left. +  \frac12 \frac{ \phi'_u\left(t_{r-1}, B_m(t_r)\right) - \phi'_u\left(t_{r-1}, B_m(t_{r-1}) \right)} {B_m(t_r) - B_m(t_{r-1})}   \right. \\
\left. - \left(\phi'_t(t_r, B_m(t_r)) + \frac12 \phi''_{uu}(t_r, B_m(t_r))\right) \right| \to 0,
\end{multline}
where by definition, $\nu_m(t_0, B_m(t_{-1}), B_m(t_0)):= \frac12 \phi''_{uu}(0, 0)$.

Moreover, with $t^m := \lfloor t 2^{2m} \rfloor 2^{-2m}$,
\begin{multline}\label{eq:nu_m}
\lim_{m \to \infty} \nu_m(t^m, B_m(t^m-2^{-2m}), B_m(t^m)) = \lim_{m \to \infty} \nu(t^m, B_m(t^m)) \\
= \lim_{m \to \infty} \nu(t, B_m(t)) =  \nu(t, B(t)),
\end{multline}
a.s., uniformly over $t \in [0, T]$.
\end{lem}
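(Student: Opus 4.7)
My plan is to work pathwise, fixing $\omega$ in the probability-one event on which (\ref{eq:Skor}) holds. As in the proof of Theorem \ref{th:Xconv}, there is then a compact $K_{\omega} \subset \mathbb{R}$ containing the ranges of $B$ and of every $B_m$ on $[0,T]$. The added hypothesis $\sigma \in C^{1,1}$, together with Assumption 2 and the identity $\phi'_u(t,u) = \sigma(t, \phi(t,u))$, yields
\[
\phi''_{tu}(t,u) = \sigma'_t(t,\phi(t,u)) + \sigma'_x(t,\phi(t,u))\,\phi'_t(t,u), \qquad \phi''_{uu}(t,u) = \sigma'_x(t,\phi(t,u))\,\sigma(t,\phi(t,u)),
\]
both continuous, hence uniformly continuous on the compact set $[0,T] \times K_{\omega}$.

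For the trapezoidal summand in $\nu_m$, I first use the mean value theorem in $t$ to rewrite the time-difference quotient as $\phi''_{tu}(\tau_r(u), u)$ for some $\tau_r(u) \in (t_{r-1}, t_r)$. Uniform continuity of $\phi''_{tu}$ on $[0,T]\times K_{\omega}$ then replaces $\tau_r(u)$ by $t_r$ with an error $o(1)$ uniformly in $(r,u)$. The remaining trapezoidal expression $T_{u=0}^{B_m(t_r)} \phi''_{tu}(t_r, u)\, 2^{-m}$ is a Riemann sum for $\int_0^{B_m(t_r)} \phi''_{tu}(t_r,u)\,du$ with mesh $2^{-m}$ on an interval of uniformly bounded length, so its error is again $o(1)$ uniformly in $r$. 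Differentiating the identity $\phi(t,0) = x_0$ in $t$ gives $\phi'_t(t,0) = 0$, so the integral evaluates exactly to $\phi'_t(t_r, B_m(t_r))$.

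For the second summand I apply the mean value theorem in $u$ to write it as $\tfrac12 \phi''_{uu}(t_{r-1}, \eta_r)$ with $\eta_r$ between $B_m(t_{r-1})$ and $B_m(t_r)$; since $|t_r - t_{r-1}| = 2^{-2m}$ and $|\eta_r - B_m(t_r)| \le 2^{-m}$, uniform continuity of $\phi''_{uu}$ converts this into $\tfrac12 \phi''_{uu}(t_r, B_m(t_r))$ with a uniform $o(1)$ error. Summing the two estimates proves (\ref{eq:nu_m1}). To deduce (\ref{eq:nu_m}), I would specialise (\ref{eq:nu_m1}) to $t_r = t^m$ for the first equality; use $|t - t^m| \le 2^{-2m}$ together with $|B_m(t) - B_m(t^m)| \le 2^{-m}$ (linear interpolation) and uniform continuity of $\nu = \phi'_t + \tfrac12 \phi''_{uu}$ on $[0,T]\times K_{\omega}$ for the second; and invoke (\ref{eq:Skor}) with the same uniform continuity of $\nu$ for the third.

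The main technical subtlety is maintaining uniformity in $r \le \lfloor T 2^{2m}\rfloor$: the trapezoidal sum ranges over $O(2^m)$ grid points, each weighted by $2^{-m}$, so modulus-of-continuity errors must be prevented from accumulating. The pathwise compactness provided by $K_{\omega}$ upgrades continuity to uniform continuity, which is what keeps every error estimate controlled by a single modulus of continuity, independent of $r$.
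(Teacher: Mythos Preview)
Your proposal is correct and follows essentially the same route as the paper: pathwise restriction to a compact set $K_\omega$, derivation of continuity of $\phi''_{tu}$ from $\sigma\in C^{1,1}$, the mean value theorem in $t$ followed by uniform continuity to pass from the time-difference quotient to $\phi''_{tu}(t_r,u)$, then from the trapezoidal sum to $\int_0^{B_m(t_r)}\phi''_{tu}(t_r,u)\,du = \phi'_t(t_r,B_m(t_r))$ via $\phi'_t(t,0)=0$, and finally the mean value theorem in $u$ plus uniform continuity of $\phi''_{uu}$ for the second summand. Your explicit remark that the $O(2^m)$ terms in the trapezoidal sum each carry weight $2^{-m}$, so the accumulated error is controlled by a single modulus of continuity times $|B_m(t_r)|\le \sup K_\omega$, is exactly the mechanism behind the paper's choice of $\epsilon/(4C_2(\omega))$ bounds.
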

\begin{proof}
By formula (\ref{eq:Skor}), there exists an $\Omega_1 \subset \Omega$ such that $\mathbb{P}(\Omega_1)=1$ and for every $\omega \in \Omega_1$,
\[
\sup_{0 \le t \le T} |B_m(\omega, t) - B(\omega, t)| \le C_1 \, m^{\frac34} 2^{-\frac{m}{2}}
\]
when $m \ge m_0(\omega)$  and the path $t \mapsto B(\omega, t)$ is continuous. Fix such an $\omega \in \Omega_1$ in the sequel. Then there exists a compact set $K(\omega) \subset \mathbb{R}$ that contains the range of $B(\omega,t)$ and of all $B_m(\omega, t)$ as $m \ge m_0(\omega)$ over $t \in [0, T]$. Define $C_2(\omega) := \sup \{ |x| : x \in K(\omega) \}$.

We need the fact that under the assumptions of this lemma, $\phi''_{ut}$ exists and is continuous, so $\phi''_{tu} = \phi''_{ut}$ exists as well. For, by (\ref{eq:ODE}),
\[
\phi''_{ut}(t, u) = \sigma'_t(t, \phi(t,u)) + \sigma'_x(t, \phi(t, u) \, \phi'_t(t,u) .
\]

Next, by the mean value theorem, with each $t_r \in [t_1, T]$ and $u \in K(\omega)$ there is a number $s_r \in [t_{r-1}, t_r]$ such that
\[
\frac{\phi'_u(t_r, u) - \phi'_u(t_{r-1}, u)}{2^{-2m}} = \phi''_{tu}(s_r, u).
\]
By our assumptions $\phi''_{tu}$ is continuous, so it is uniformly continuous on $[0,T] \times K(\omega)$. Thus for any $\epsilon >0$, there exists $m_1(\omega)$ such that for any $m \ge m_1(\omega)$ we have
\[
|\phi''_{tu}(s_r,u) - \phi''_{tu}(t_r,u)| < \frac{\epsilon}{4 C_2(\omega)}
\]
(since $|s_r - t_r| \le 2^{-2m}$) and
\[
|\phi''_{tu}(t_r,u) - \phi''_{tu}(t_r,\tilde{u})| < \frac{\epsilon}{4 C_2(\omega)},
\]
when $t_r \in[t_1, T]$, $u, \tilde{u} \in K(\omega)$, $|u - \tilde{u}| \le 2^{-m}$.

These imply that
\[
\left| T_{u=0}^{B_m(\omega, t_r)} \phi''_{tu}(s_r, u) 2^{-m} - T_{u=0}^{B_m(\omega, t_r)} \phi''_{tu}(t_r, u) 2^{-m} \right| < \frac{\epsilon}{4 C_2(\omega)} |B_m(\omega, t_r)| \le \frac{\epsilon}{4}
\]
and
\[
\left| T_{u=0}^{B_m(\omega, t_r)} \phi''_{tu}(t_r, u) 2^{-m} - \int_{0}^{B_m(\omega, t_r)} \phi''_{tu}(t_r, u) d u \right| < \frac{\epsilon}{4 C_2(\omega)} |B_m(\omega, t_r)| \le \frac{\epsilon}{4}
\]
for each $t_r \in [t_1, T]$, when $m \ge \max\{m_0(\omega), m_1(\omega)\}$.

Since
\[
\int_{0}^{B_m(\omega, t_r)} \phi''_{tu}(t_r, u) d u = \phi'_t(t_r, B_m(\omega, t_r)) - \phi'_t(t_r, 0)
\]
and $\phi'_t(t_r, 0) = 0$ by the condition $\phi(t,0) = x_0$ ($t \in [0, T]$) in (\ref{eq:ODE}), it follows that
\begin{equation}\label{eq:locdrift1}
\left| T_{u=0}^{B_m(\omega, t_r)} \frac{\phi'_u(t_r, u) - \phi'_u(t_{r-1}, u)}{2^{-2m}} 2^{-m} - \phi'_t(t_r, B_m(\omega, t_r)) \right| < \frac{\epsilon}{2}
\end{equation}
for each $t_r \in [t_1, T]$, when $m \ge \max\{m_0(\omega), m_1(\omega)\}$.

Further, by the mean value theorem, with each $t_r \in [t_1, T]$ there is a number $\tilde{s}_r \in [t_{r-1}, t_r]$ such that
\begin{equation}\label{eq:locdrift2}
\frac{ \phi'_u\left(t_{r-1}, B_m(\omega, t_r)\right) - \phi'_u\left(t_{r-1}, B_m(\omega, t_{r-1}) \right)} {B_m(\omega, t_r) - B_m(\omega, t_{r-1})} = \phi''_{uu}(t_{r-1}, B_m(\omega, \tilde{s}_r)).
\end{equation}
By the uniform continuity of $\phi''_{uu}$ on $[0,T] \times K(\omega)$, there exists $m_2(\omega)$ such that for each $m \ge m_1(\omega)$ we have
\begin{equation}\label{eq:locdrift3}
|\phi''_{uu}(t_{r-1},B_m(\omega, \tilde{s}_r)) - \phi''_{uu}(t_r,B_m(\omega, t_r))| < \frac{\epsilon}{2},
\end{equation}
since $|t_r - t_{r-1}| \le 2^{-2m}$ and $B_m(\omega, \tilde{s}_r) - B_m(\omega, t_r) \le 2^{-m}$.

By (\ref{eq:locdrift1}), (\ref{eq:locdrift2}), and (\ref{eq:locdrift3}), statement (\ref{eq:nu_m1}) of the lemma holds when $m \ge \max\{m_0(\omega), m_1(\omega), m_2(\omega)\}$. The first two equalities in (\ref{eq:nu_m}) follow from this and the uniform continuity of $\nu(t,x)$ on $[0,T]\times K(\omega)$, while the third equality follows from formula (\ref{eq:Skor}) and again from the uniform continuity of $\nu(t,x)$.
\end{proof}

For any $m = 0, 1, 2, \dots$ fixed, we now change the probability $\mathbb{P}$ into a probability measure $\mathbb{Q}_m$ to adjust the drift $\nu(t_{r}, B_m(t_{r}))$ in (\ref{eq:nu_m1}) to coincide with $\mu(t_{r}, X_m(t_{r}))$, where $X_m$ is defined by (\ref{eq:X_m}). Similarly to Girsanov's theorem, this can be achieved by introducing a new probability measure $\mathbb{Q}_m$ setting
\begin{multline}\label{eq:Girsm}
\frac{d\mathbb{Q}_m}{d\mathbb{P}} = \Lambda_m(T)  \\
:= \exp\left\{-\sum_{r=1}^{\lfloor T 2^{2m} \rfloor} \psi_m(t_{r-1}) (B_m(t_r) - B_m(t_{r-1})) \right. \\
- \left. \sum_{r=1}^{\lfloor T 2^{2m} \rfloor} \log \cosh \left(\psi_m(t_{r-1}) 2^{-m}\right) \right\},
\end{multline}
where
\begin{equation}\label{eq:psim}
\psi_m(t_r) := \frac{\nu(t_r, B_m(t_r)) - \mu(t_r, X_m(t_r))}{\sigma(t_r, X_m(t_r))} = \tilde{\psi}(t_r, B_m(t_r)) \qquad (t_r \in [0, T])
\end{equation}
and $\tilde{\psi}$ is defined by (\ref{eq:psitilde}).

Formula (\ref{eq:Girsm}) is based on the following lemma.
\begin{lem}
Under Assumptions 1 and 2, for any $m \ge 0$ fixed, taking $\Lambda_m(0) := 1$,
\begin{multline}\label{eq:lambdam}
\Lambda_m(t_n) := \exp\left\{-\sum_{r=1}^{n} \psi_m(t_{r-1}) (B_m(t_r) - B_m(t_{r-1})) \right. \\
\left. - \sum_{r=1}^{n} \log \cosh \left(\psi_m(t_{r-1}) 2^{-m}\right) \right\} \quad (n \ge 1)
\end{multline}
is a discrete time positive $\mathbb{P}$-martingale over $t_n \in [0, T]$, with
\[
\mathbb{E}_{\mathbb{P}}\Lambda_m(t_n) = 1 .
\]
For comparison with (\ref{eq:Q}), recall that
\begin{equation}\label{eq:logcosh}
\log \cosh(x) = \frac{x^2}{2} + O(x^4) \qquad (x \to 0).
\end{equation}
\end{lem}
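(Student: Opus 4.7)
My plan is to recognize $\Lambda_m$ as a product of one-step Esscher factors and verify the martingale property by a standard conditional expectation computation against the symmetric $\pm 1$ increments of $B_m$.

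First, I would fix the discrete filtration $\mathcal{F}^m_{t_r} := \sigma(B_m(t_0), B_m(t_1), \dots, B_m(t_r))$ and recall that for the Skorohod embedded walk $B_m$ the increments
\[
\Delta_r := B_m(t_r) - B_m(t_{r-1}) = 2^{-m}\xi_m(r)
\]
form an i.i.d. symmetric $\pm 2^{-m}$ sequence, with $\xi_m(r)$ independent of $\mathcal{F}^m_{t_{r-1}}$ (this is exactly the property used to introduce the stochastic sum in (\ref{eq:stocsum1})). Under Assumptions 1 and 2 the map $\widetilde{\psi}$ of (\ref{eq:psitilde}) is continuous, so $\psi_m(t_{r-1}) = \widetilde{\psi}(t_{r-1},B_m(t_{r-1}))$ is $\mathcal{F}^m_{t_{r-1}}$-measurable.

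Next, I would write $\Lambda_m$ as the telescoping product
\[
\Lambda_m(t_n) = \prod_{r=1}^{n} Z_r, \qquad Z_r := \frac{\exp\{-\psi_m(t_{r-1})\,\Delta_r\}}{\cosh(\psi_m(t_{r-1})\,2^{-m})},
\]
which makes positivity immediate (both numerator and denominator are strictly positive). Then, conditioning on $\mathcal{F}^m_{t_{r-1}}$ and using that $\xi_m(r)$ takes the values $\pm 1$ each with probability $\tfrac12$ independently of $\mathcal{F}^m_{t_{r-1}}$, I would compute
\[
\mathbb{E}_{\mathbb{P}}\!\left[\exp\{-\psi_m(t_{r-1})\,2^{-m}\xi_m(r)\}\,\bigl|\,\mathcal{F}^m_{t_{r-1}}\right] = \tfrac12 e^{-\psi_m(t_{r-1})2^{-m}} + \tfrac12 e^{\psi_m(t_{r-1})2^{-m}} = \cosh(\psi_m(t_{r-1})2^{-m}),
\]
so $\mathbb{E}_{\mathbb{P}}[Z_r \mid \mathcal{F}^m_{t_{r-1}}] = 1$. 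Consequently
\[
\mathbb{E}_{\mathbb{P}}[\Lambda_m(t_n) \mid \mathcal{F}^m_{t_{n-1}}] = \Lambda_m(t_{n-1})\,\mathbb{E}_{\mathbb{P}}[Z_n \mid \mathcal{F}^m_{t_{n-1}}] = \Lambda_m(t_{n-1}),
\]
which is the discrete martingale property. The normalisation $\mathbb{E}_{\mathbb{P}}\Lambda_m(t_n)=1$ then follows by the tower property from $\Lambda_m(0)=1$.

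There is essentially no hard step here: the only point requiring a word of care is the measurability/independence setup, i.e.\ confirming that with $B_m$ the Skorohod embedded walk the increments are genuinely i.i.d.\ symmetric and independent of the past. That is exactly what was used already in (\ref{eq:stocsum1}) and Theorem~\ref{th:Ito}, so it can simply be cited. The asymptotic formula (\ref{eq:logcosh}) is not used in the proof of the lemma itself; it is recorded only to show that as $m\to\infty$ the second exponent in (\ref{eq:lambdam}) formally matches the $-\tfrac12\int_0^T\psi^2(s)\,ds$ term in (\ref{eq:Q}), and I would mention this one-line Taylor expansion as a concluding remark rather than as part of the proof.
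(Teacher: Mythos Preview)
Your proof is correct and follows essentially the same route as the paper: factor $\Lambda_m(t_{n})$ out of $\Lambda_m(t_{n+1})$ and compute the conditional expectation of the remaining one-step factor using that the increment $B_m(t_n)-B_m(t_{n-1})=\pm 2^{-m}$ is symmetric and independent of the past. The only small item the paper makes explicit that you leave implicit is integrability: since $B_m(t_r)$ takes finitely many values and $\widetilde{\psi}$ is continuous, $\Lambda_m(t_n)$ is a bounded random variable, so $\mathbb{E}_{\mathbb{P}}|\Lambda_m(t_n)|<\infty$; you may want to add one sentence to that effect.
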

\begin{proof}
Fix $m \ge 0$. Under our assumptions, $\tilde{\psi}$ in (\ref{eq:psitilde}) is a continuous function on $[0, T] \times \mathbb{R}$. Then for any $t_r \in [0, T]$ fixed, $\psi_m(t_r) = \tilde{\psi}(t_r, B_m(t_r))$ is a random variable that takes finitely many finite values. The same statement is true for $\Lambda_m(t_n)$ with $t_n \in [0, T]$ fixed; thus $\Lambda_m(t_n)$ is a bounded random variable and so  $\mathbb{E}_{\mathbb{P}}\Lambda_m(t_n) < \infty$.

Also, for $n \ge 0$,
\begin{multline*}
\mathbb{E}_{\mathbb{P}} \left(\Lambda_m(t_{n+1})|\mathcal{F}_{t_n}\right) \\
= \frac{\Lambda_m(t_n)}{\cosh(\psi_m(t_n) 2^{-m})} \, \mathbb{E}_{\mathbb{P}} \left(e^{-\psi_m(t_n) (B_m(t_{n+1})- B_m(t_n))} | \mathcal{F}_{t_n}\right)
= \Lambda_m(t_n) .
\end{multline*}
This proves the lemma.
\end{proof}

\begin{thm} \label{th:measureconv}
In addition to Assumptions 1, 2 and 3, suppose that $\mu, \sigma \in C^{1,1}([0, T] \times D)$ and $\phi''_{tt}, \phi'''_{uut}, \phi'''_{uuu}$ exist and are continuous on $[0, T]\times \mathbb{R}$. Then the total variation distance between the probability measures $\mathbb{Q}_m$ and $\mathbb{Q}$ tends to 0:
\[
\delta(\mathbb{Q}_m, \mathbb{Q}) := \sup_{A \in \mathcal{F}} |\mathbb{Q}_m(A) - \mathbb{Q}(A)| \to 0 \qquad (m \to \infty).
\]
\end{thm}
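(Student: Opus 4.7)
The plan is to show that $\Lambda_m(T) \to \Lambda(T)$ almost surely and then invoke Scheff\'e's lemma. Because $\mathbb{E}_{\mathbb{P}} \Lambda_m(T) = 1$ (previous lemma) and $\mathbb{E}_{\mathbb{P}} \Lambda(T) = 1$ (Assumption 3), we have two sequences of non-negative random variables with common expectation $1$; a.s.\ convergence therefore upgrades automatically to $L^1(\mathbb{P})$ convergence. Combined with the standard identity
\[
\delta(\mathbb{Q}_m, \mathbb{Q}) = \frac12\, \mathbb{E}_{\mathbb{P}} |\Lambda_m(T) - \Lambda(T)|,
\]
this will yield the theorem.

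The heart of the argument is thus convergence of the exponent in (\ref{eq:lambdam}) to that of $\Lambda(T)$. Since $\tilde{\psi}$ is continuous (Assumptions 1,2) and $B_m \to B$ uniformly on $[0,T]$ a.s., I would first conclude that $\psi_m(t) = \tilde{\psi}(t, B_m(t))$ converges to $\psi(t) = \tilde{\psi}(t, B(t))$ uniformly on $[0,T]$ a.s., and that both are bounded on $[0,T]$ for all sufficiently large $m$ on a full-measure event. For the stochastic sum I would then apply Theorem \ref{th:Ito} to the antiderivative $g(t,u) := \int_0^u \tilde{\psi}(t,v)\di v$; under the extra smoothness assumptions of the theorem ($\mu,\sigma \in C^{1,1}$ together with $\phi''_{tt}, \phi'''_{uut}, \phi'''_{uuu}$ continuous), direct differentiation of (\ref{eq:psitilde}) shows that $\tilde{\psi} \in C^{1,1}([0,T]\times\mathbb{R})$, whence $g \in C^{1,2}$ with $g'_u = \tilde{\psi}$. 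Theorem \ref{th:Ito} then gives
\[
\sum_{r=1}^{\lfloor T 2^{2m}\rfloor} \psi_m(t_{r-1})\bigl(B_m(t_r) - B_m(t_{r-1})\bigr) \longrightarrow \int_0^T \psi(s)\di B(s) \qquad \text{a.s.}
\]
For the $\log\cosh$ sum I would use the expansion (\ref{eq:logcosh}) together with the uniform boundedness of $\psi_m$ to write
\[
\sum_{r=1}^{\lfloor T 2^{2m}\rfloor} \log\cosh(\psi_m(t_{r-1}) 2^{-m}) = \tfrac12 \sum_{r=1}^{\lfloor T 2^{2m}\rfloor} \psi_m^2(t_{r-1})\, 2^{-2m} + O(2^{-2m}),
\]
and identify the leading term as a Riemann sum converging to $\tfrac12 \int_0^T \psi^2(s)\di s$ by the uniform convergence of $\psi_m$ to $\psi$ and continuity of $\psi^2$. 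Adding the two limits gives a.s.\ convergence of the exponent, hence of $\Lambda_m(T)$, to $\Lambda(T)$.

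The main technical obstacle is the justification of the stochastic-sum limit, which is why the stronger hypotheses on $\mu,\sigma$ and on the third partials of $\phi$ appear in the statement: they are used precisely to guarantee $\tilde{\psi} \in C^{1,1}$ so that the antiderivative $g$ is $C^{1,2}$ and Theorem \ref{th:Ito} is applicable. Once this is in place, the $\log\cosh$ estimate, the Riemann-sum step, and the passage through Scheff\'e's lemma are standard.
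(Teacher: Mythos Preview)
Your proposal is correct and follows essentially the same route as the paper's own proof: reduce via Scheff\'e to a.s.\ convergence of $\Lambda_m(T)$ to $\Lambda(T)$, verify $\tilde{\psi}\in C^{1,1}$ from the extra smoothness hypotheses so that Theorem~\ref{th:Ito} applies to $f=\tilde{\psi}$ (equivalently, to $g(t,u)=\int_0^u \tilde{\psi}(t,v)\di v\in C^{1,2}$) for the stochastic sum, and handle the $\log\cosh$ sum by (\ref{eq:logcosh}) plus a Riemann-sum argument using (\ref{eq:psiconv}). The paper's write-up is slightly terser (it cites Scheff\'e directly rather than spelling out the $\mathbb{E}_{\mathbb{P}}\Lambda_m(T)=\mathbb{E}_{\mathbb{P}}\Lambda(T)=1$ step), but the logic is identical.
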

\begin{proof}
By Scheff\'e's theorem, see e.g. \cite[p. 224]{Bil1968}, it is enough to show that the Radon--Nikodym derivatives $d \mathbb{Q}_m/d \mathbb{P} = \Lambda_m(T)$ converge to $d \mathbb{Q}/d \mathbb{P} = \Lambda(T)$ $\mathbb{P}$-a.s.

Under our assumptions, $\tilde{\psi}(t,u) \in C^{1,1}([0, T]\times \mathbb{R})$. Remember that $\psi(t) = \tilde{\psi}(t, B(t))$ and $\psi_m(t) = \tilde{\psi}(t, B_m(t))$. Thus applying Theorem \ref{th:Ito} with $f(t,u) = \tilde{\psi}(t,u)$, it follows that
\[
\lim_{m \to \infty} \sum_{r=1}^{\lfloor T 2^{2m} \rfloor} \psi_m(t_{r-1}) (B_m(t_r) - B_m(t_{r-1})) = \int_0^T \psi(s) \, dB(s) \qquad \mathbb{P}\text{-a.s.}
\]

Let the probability 1 subset $\Omega_1 \subset \Omega$ and the compact set $K(\omega) \subset \mathbb{R}$ denote the same as in the proof of Lemma \ref{le:locdrift}, where $\omega \in \Omega_1$ is fixed. Then $\tilde{\psi}(t,u)$ is uniformly continuous on the compact set $[0, T] \times K(\omega)$, so by the a.s. uniform convergence of $B_m$ to $B$ it follows that
\begin{equation}\label{eq:psiconv}
\lim_{m \to \infty} \sup_{0 \le t \le T} \left| \psi_m(\omega,t) - \psi(\omega,t)\right| = 0 .
\end{equation}
Moreover, the functions $\psi_m(\omega,t)$ are uniformly bounded on $[0, T]$ for all $m$ large enough. Thus by (\ref{eq:logcosh}),
\begin{multline*}
\lim_{m \to \infty} \sum_{r=1}^{\lfloor T 2^{2m} \rfloor} \log \cosh \left(\psi_m(\omega, t_{r-1}) 2^{-m}\right)
= \frac12 \, \lim_{m \to \infty} \sum_{r=1}^{\lfloor T 2^{2m} \rfloor} \psi^2_m(\omega, t_{r-1}) 2^{-2m} \\
= \frac12 \, \int_0^T \psi^2(\omega,s) ds
\end{multline*}
This completes the proof of the theorem.
\end{proof}

Now we add a suitable drift to $B_m$, the resulting nearest neighbor random walk is
\begin{multline}\label{eq:Wm}
W_m(t_n) := B_m(t_n) + \sum_{r=1}^n \tanh\left(\psi_m(t_{r-1}) 2^{-m}\right) 2^{-m} \\
= B_m(t_n) + \sum_{r=1}^n \psi_m(t_{r-1}) 2^{-2m} + O(2^{-2m}),
\end{multline}
where we used that
\begin{equation}\label{eq:tanh}
\tanh(x) = x + O(x^3) \qquad (x \to 0),
\end{equation}
and that under our assumptions, $\psi_m$ is bounded as $m \to \infty$, but its bound may depend on $\omega$.

\begin{lem} \label{le:Wm_mart}
Under Assumptions 1 and 2, for any $m \ge 0$ fixed, $W_m(t_n)$ is a $\mathbb{Q}_m$-martingale over $t_n = n 2^{-2m} \in [0, T]$ $(n=0,1,2\dots)$.
\end{lem}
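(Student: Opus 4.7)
My plan is to establish the martingale property via the standard discrete-time Bayes' rule for change of measure: since $\Lambda_m$ is a strictly positive $\mathbb{P}$-martingale on the grid $t_n = n 2^{-2m}$ (by the previous lemma), $W_m$ is a $\mathbb{Q}_m$-martingale with respect to $(\mathcal{F}_{t_n})$ if and only if the product $W_m(t_n)\Lambda_m(t_n)$ is a $\mathbb{P}$-martingale on the same filtration. So I would reduce the claim to verifying
\[
\mathbb{E}_{\mathbb{P}}\bigl[W_m(t_{n+1})\Lambda_m(t_{n+1}) \,\big|\, \mathcal{F}_{t_n}\bigr] = W_m(t_n)\Lambda_m(t_n).
\]

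To do this, I would write $\Delta B_{n+1} := B_m(t_{n+1}) - B_m(t_n) = \pm 2^{-m}$, noting that under $\mathbb{P}$ this increment is a symmetric Bernoulli variable independent of $\mathcal{F}_{t_n}$, while $\psi_n := \psi_m(t_n)$ is $\mathcal{F}_{t_n}$-measurable. From the definitions,
\[
\Lambda_m(t_{n+1}) = \Lambda_m(t_n)\,\frac{e^{-\psi_n \Delta B_{n+1}}}{\cosh(\psi_n 2^{-m})},
\qquad
W_m(t_{n+1}) = W_m(t_n) + \Delta B_{n+1} + \tanh(\psi_n 2^{-m})\,2^{-m}.
\]
Substituting and pulling $\mathcal{F}_{t_n}$-measurable factors out of the conditional expectation reduces everything to two elementary one-step computations: $\mathbb{E}_{\mathbb{P}}[e^{-\psi_n \Delta B_{n+1}}|\mathcal{F}_{t_n}] = \cosh(\psi_n 2^{-m})$ and $\mathbb{E}_{\mathbb{P}}[\Delta B_{n+1} e^{-\psi_n \Delta B_{n+1}}|\mathcal{F}_{t_n}] = -2^{-m}\sinh(\psi_n 2^{-m})$.

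Putting these together, the conditional expectation simplifies to
\[
\Lambda_m(t_n)\Bigl[W_m(t_n) + \tanh(\psi_n 2^{-m})2^{-m} - 2^{-m}\tanh(\psi_n 2^{-m})\Bigr] = \Lambda_m(t_n) W_m(t_n),
\]
exactly as required. The two drift contributions cancel precisely because the added drift in (\ref{eq:Wm}) was chosen as $\tanh(\psi_m(t_{r-1}) 2^{-m})\,2^{-m}$; this is really the design principle behind the definition of $W_m$.

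There is no serious obstacle here: $\Lambda_m(t_n)$ is bounded (as noted in the previous lemma, it takes finitely many values for each $n$), so integrability is immediate and no approximation arguments are needed. The only care required is in the algebraic bookkeeping of the $\cosh$/$\sinh$/$\tanh$ identities and in citing Bayes' rule correctly for the change of measure on the discrete filtration.
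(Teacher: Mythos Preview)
Your proposal is correct and follows essentially the same route as the paper: reduce the $\mathbb{Q}_m$-martingale property to showing $\Lambda_m(t_n)W_m(t_n)$ is a $\mathbb{P}$-martingale, then verify the one-step conditional expectation using the symmetric Bernoulli increment and the $\cosh$/$\sinh$/$\tanh$ identities so that the two drift contributions cancel. The paper's proof is organized identically, with the same integrability remark (finitely many values) and the same algebraic cancellation $\sinh(\psi_m(t_n)2^{-m})2^{-m} - \sinh(\psi_m(t_n)2^{-m})2^{-m} = 0$.
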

\begin{proof}
As is well-known, to show that $W_m(t)$ is a $\mathbb{Q}_m$-martingale it is enough to prove that $\Lambda_m(t_n) W_m(t_n)$ is a $\mathbb{P}$-martingale. It is clear that $\Lambda_m(t_n) W_m(t_n)$ takes finitely many finite values, so its expectation is finite. 

Using (\ref{eq:lambdam}) and (\ref{eq:Wm}), we obtain that
\begin{multline*}
\mathbb{E}_{\mathbb{P}}\left( \Lambda_m(t_{n+1}) W_m(t_{n+1}) \mid \mathcal{F}_{t_n} \right) \\
= \Lambda_m(t_{n}) \left\{ \frac{W_m(t_{n})}{\cosh\left(\psi_m(t_n) 2^{-m }\right)}
\mathbb{E}_{\mathbb{P}}\left( e^{-\psi_m(t_n) (B_m(t_{n+1}) - B_m(t_n))} \mid \mathcal{F}_{t_n} \right) \right. \\
+  \frac{1}{\cosh\left(\psi_m(t_n) 2^{-m }\right)} \\
\left. \times \left\{ \tanh\left(\psi_m(t_{n}) 2^{-m}\right) 2^{-m}
\mathbb{E}_{\mathbb{P}}\left( e^{-\psi_m(t_n) (B_m(t_{n+1}) - B_m(t_n))} \mid \mathcal{F}_{t_n} \right) \right. \right. \\
+ \left. \left. \mathbb{E}_{\mathbb{P}}\left( (B_m(t_{n+1}) - B_m(t_n)) e^{-\psi_m(t_n) (B_m(t_{n+1}) - B_m(t_n))} \mid \mathcal{F}_{t_n} \right) \right\} \right\} \\
= \Lambda_m(t_{n}) \left\{ W_m(t_{n}) + \frac{\sinh\left(\psi_m(t_n) 2^{-m}\right) 2^{-m}
-  \sinh\left(\psi_m(t_n) 2^{-m}\right) 2^{-m}}{\cosh\left(\psi_m(t_n) 2^{-m }\right)} \right\} \\
= \Lambda_m(t_{n}) W_m(t_{n}) .
\end{multline*}
\end{proof}

By (\ref{eq:Wm}), $W_m(t_n)$ $(0 \le t_n \le T)$ is a nearest neighbor random walk. On a time interval $[t_n, t_{n+1}]$ it can move up or down by the amount
\begin{eqnarray}\label{eq:Wmstep}
W_m(t_{n+1}) - W_m(t_n) &=& B_m(t_{n+1}) - B_m(t_n) + \tanh(\psi_m(t_n) 2^{-m}) \, 2^{-m} \nonumber \\
&=& B_m(t_{n+1}) - B_m(t_n) + \psi_m(t_n) 2^{-2m} + O\left(2^{-4m}\right) \nonumber \\
&=& \pm 2^{-m} + \psi_m(t_n) 2^{-2m} + O\left(2^{-4m}\right).
\end{eqnarray}
The steps of $W_m$ are neither independent, nor identically distributed in general, but $W_m$ is a $\mathbb{Q}_m$-martingale. Denote the conditional $\mathbb{Q}_m$ probability of its stepping up or down during the time interval $[t_n, t_{n+1}]$ given $\mathcal{F}_{t_n}$ by $q^+_m(t_n)$ and $q^-_m(t_n)$, respectively. Then it follows that 
\begin{multline*}
q^+_m(t_n) \left(2^{-m} + \tanh\left(\psi_m(t_n) 2^{-m}\right) 2^{-m}\right) \\
+ q^-_m(t_n) \left(2^{-m} - \tanh\left(\psi_m(t_n) 2^{-m}\right) 2^{-m}\right) = 0 .
\end{multline*}
For $n \ge 0$ it implies that
\begin{equation}\label{eq:qn}
q^{\pm}_m(t_n) = \frac12 \mp \frac12 \tanh\left(\psi_m(t_n) 2^{-m} \right)
= \frac12 \mp \frac12 \psi_m(t_n) 2^{-m}  + O\left(2^{-3m} \right).
\end{equation}

Based on this, now we can describe the discrete probability distributions $\mathbb{Q}_m$ $(m=0,1,2,\dots)$, that is, the probabilities of the paths of the processes $W_m(t_r)$, $B_m(t_r)$, and most importantly, of $X_m(t_r)$ as $t_r = r 2^{-2m} \in [0,T]$  $(r=0,1,2,\dots)$.
\begin{thm} \label{th:Qm}
Start with Assumptions 1 and 2. Take an arbitrary positive integer $n$ such that $n 2^{-2m} \in [0, T]$ and arbitrary numbers $\epsilon_r = \pm 1$ $(r=0,1,\dots,n)$. Then
\begin{multline*}
\mathbb{Q}_m\left(X_m(t_0) = x_0, X_m(t_r) = \phi(t_r, (\epsilon_1 + \cdots + \epsilon_r)2^{-m}), \, r=1,2,\dots,n \right) \\
= \prod_{r=0}^{n-1} q^{\epsilon_{r+1}}_m(t_{r}),
\end{multline*}
where
\[
q^{\epsilon_{r+1}}_m(t_{r}) := \frac12 - \frac12 \epsilon_{r+1} \tanh\left(\psi_m(t_{r}) 2^{-m} \right) .
\]
\end{thm}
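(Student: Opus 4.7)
The plan is to reduce the joint event on the $X_m$ path to a joint event on the signs of the increments of the underlying random walk $B_m$, and then apply the tower property together with the one-step transition probabilities computed in (\ref{eq:qn}).

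For the reduction, set $\xi_r := 2^m(B_m(t_r) - B_m(t_{r-1})) \in \{\pm 1\}$, so that $B_m(t_r) = (\xi_1 + \cdots + \xi_r) 2^{-m}$ and $B_m(t_0) = 0$. Under Assumption 1, $\sigma > 0$ on $[0,T] \times D$, hence by (\ref{eq:ODE}) the derivative $\phi'_u(t,u) = \sigma(t, \phi(t,u))$ is strictly positive, so $u \mapsto \phi(t_r, u)$ is strictly increasing for every $t_r$. Consequently the event
\[
\{X_m(t_0) = x_0,\; X_m(t_r) = \phi(t_r, (\epsilon_1 + \cdots + \epsilon_r) 2^{-m}),\; r = 1, \ldots, n\}
\]
coincides pathwise with $\{\xi_r = \epsilon_r,\; r = 1, \ldots, n\}$; the constraint $X_m(t_0) = x_0$ is automatic since $\phi(0,0) = x_0$ by (\ref{eq:ODE}).

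Next, invoke the conditional probabilities already computed in (\ref{eq:qn}). These were derived from Lemma \ref{le:Wm_mart} (the $\mathbb{Q}_m$-martingale property of $W_m$) together with the step-size identity (\ref{eq:Wmstep}): since the drift increment $\tanh(\psi_m(t_r) 2^{-m}) 2^{-m}$ is $\mathcal{F}_{t_r}$-measurable, the sign $\xi_{r+1}$ of the $B_m$-increment carries over to the up/down choice of $W_m$, and enforcing $\mathbb{E}_{\mathbb{Q}_m}(W_m(t_{r+1}) - W_m(t_r) \mid \mathcal{F}_{t_r}) = 0$ yields
\[
\mathbb{Q}_m(\xi_{r+1} = \epsilon_{r+1} \mid \mathcal{F}_{t_r}) = q^{\epsilon_{r+1}}_m(t_r).
\]

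Finally, since $\{\xi_s = \epsilon_s,\; s = 1, \ldots, k\}$ is $\mathcal{F}_{t_k}$-measurable and each $q^{\epsilon_{k+1}}_m(t_k)$ is $\mathcal{F}_{t_k}$-measurable as well, an iterated application of the tower property telescopes:
\[
\mathbb{Q}_m\bigl(\xi_s = \epsilon_s,\; s = 1, \ldots, n\bigr)
= \mathbb{E}_{\mathbb{Q}_m}\!\left[\mathbf{1}_{\{\xi_s = \epsilon_s,\, s \le n-1\}}\, q^{\epsilon_n}_m(t_{n-1})\right]
= \cdots = \prod_{r=0}^{n-1} q^{\epsilon_{r+1}}_m(t_r),
\]
which is the desired identity. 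The only subtle point is the reduction of the $X_m$-event to a $\xi$-event, which hinges on the strict monotonicity of $\phi(t_r, \cdot)$; all remaining work is bookkeeping, since (\ref{eq:qn}) was already established in the text immediately preceding the theorem.
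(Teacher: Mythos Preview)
Your proof is correct and follows the same route as the paper's: reduce the $X_m$-path event to a $B_m$-increment event, then multiply the one-step conditional probabilities from (\ref{eq:qn}) via the tower property. You are in fact more careful than the paper on the reduction step, since you explicitly use $\phi'_u=\sigma>0$ to get injectivity of $u\mapsto\phi(t_r,u)$, whereas the paper only says ``the paths of $X_m$ are determined by the paths of $B_m$''; the one point worth making explicit in your telescoping is that on the event $\{\xi_s=\epsilon_s,\,s\le r\}$ the random quantity $q^{\epsilon_{r+1}}_m(t_r)$ takes a fixed numerical value (because $B_m(t_r)$ is pinned), which is why the final product is a number.
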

\begin{proof}
By definition, $X_m(t_r) = \phi(t_r, B_m(t_r))$, so the paths of $X_m$ are determined by the paths of $B_m$, which are, in turn, determined by the paths of $W_m$ by (\ref{eq:Wm}) and (\ref{eq:Wmstep}). Since $W_m$ is a martingale by Lemma \ref{le:Wm_mart}, we can determine the probability of a path step-by-step, using the conditional $\mathbb{Q}_m$ probabilities (\ref{eq:qn}) given the past $\sigma$-algebra $\mathcal{F}_{t_{r}}$ at the step in the time interval $[t_{r}, t_{r+1}]$ and then multiplying these conditional probabilities.
\end{proof}

By piecewise linear interpolation, one can extend the process $W_m$ over the whole interval $[0,T]$ as processes with a.s. continuous paths. (Since the probability measures $\mathbb{P}$ and $\mathbb{Q}$ are equivalent, the notion ``almost sure'' refers to both.)
\begin{lem} \label{le:Wm_conv}
Under Assumptions 1, 2 and 3, the sequence of processes $W_m(t)$ a.s. converges as $m \to \infty$, uniformly on $[0,T]$, to the $\mathbb{Q}$-Brownian motion $W(t)$ defined by (\ref{eq:W}).
\end{lem}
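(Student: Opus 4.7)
The plan is to decompose $W_m(t) - W(t) = [B_m(t) - B(t)] + [D_m(t) - D(t)]$, where $D_m(t) := W_m(t) - B_m(t)$ is the discrete drift and $D(t) := \int_0^t \psi(s)\,ds$ is the continuous drift, and show each piece vanishes uniformly on $[0,T]$ almost surely. I fix $\omega$ in a probability-one event on which (i) the bound (\ref{eq:Skor}) holds, (ii) the path $B(\omega,\cdot)$ is continuous on $[0,T]$, and (iii) the uniform convergence (\ref{eq:psiconv}) of $\psi_m(\omega,\cdot)$ to $\psi(\omega,\cdot)$ holds. Note that under Assumptions 1 and 2 the function $\tilde\psi$ is continuous on $[0,T]\times\mathbb{R}$, so (\ref{eq:psiconv}) follows exactly as in the proof of Theorem \ref{th:measureconv}. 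As in the proof of Lemma \ref{le:locdrift}, the paths $B_m(\omega,\cdot)$ and $B(\omega,\cdot)$ all lie in a common compact $K(\omega)\subset\mathbb{R}$ for $m\ge m_0(\omega)$; hence by continuity of $\tilde\psi$ there is a finite $M(\omega)$ with $|\psi_m(\omega,t)|\vee|\psi(\omega,t)|\le M(\omega)$ uniformly in $m$ and $t\in[0,T]$.

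Next I handle the grid points $t_n=n2^{-2m}\in[0,T]$. By (\ref{eq:Wm}) and the expansion (\ref{eq:tanh}) together with the uniform bound $M(\omega)$,
\[
W_m(t_n) - B_m(t_n) = \sum_{r=1}^n \psi_m(t_{r-1})\,2^{-2m} + O\!\bigl(T\,M^3(\omega)\,2^{-2m}\bigr),
\]
uniformly in $n$. Setting $n:=\lfloor t2^{2m}\rfloor$, the remaining task is to show the Riemann sum converges to $\int_0^t\psi(s)\,ds$ uniformly in $t\in[0,T]$. I split the discrepancy into three pieces: a mismatch sum $\sum_r[\psi_m(t_{r-1})-\psi(t_{r-1})]\,2^{-2m}$, a Riemann-sum error $\sum_r\psi(t_{r-1})\,2^{-2m}-\int_0^{t_n}\psi(s)\,ds$, and a tail $-\int_{t_n}^t\psi(s)\,ds$. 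The first is bounded by $T\sup_s|\psi_m(s)-\psi(s)|$, which tends to zero by (\ref{eq:psiconv}); the second is a left-endpoint Riemann-sum error for a continuous (hence uniformly continuous on $[0,T]$) function and vanishes uniformly in $t$; the tail is bounded by $M(\omega)2^{-2m}$. Combined with $\sup_t|B_m(t)-B(t)|=O(m^{3/4}2^{-m/2})$ from (\ref{eq:Skor}), this gives $\max_{t_n\in[0,T]}|W_m(t_n)-W(t_n)|\to 0$.

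Finally I pass from grid points to arbitrary $t\in[0,T]$ via the piecewise linear interpolation. By (\ref{eq:Wmstep}) the jump of $W_m$ over $[t_n,t_{n+1}]$ is $O(2^{-m})$, so $|W_m(t)-W_m(t_n)|=O(2^{-m})$ uniformly; likewise $|W(t)-W(t_n)|\le|B(t)-B(t_n)|+M(\omega)\,2^{-2m}$ vanishes uniformly by the (almost sure) uniform continuity of $B(\omega,\cdot)$ on $[0,T]$. Under Assumption 3, $W$ is indeed a $\mathbb{Q}$-Brownian motion, and since $\mathbb{P}$ and $\mathbb{Q}$ are equivalent on $\mathcal{F}_T$ the convergence holds almost surely under either measure. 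The main potential obstacle is ensuring that $M(\omega)$ can be chosen independently of $m$; this rests on the compactness of the common range $K(\omega)$ of $B$ and all $B_m$ established in the proof of Lemma \ref{le:locdrift}, combined with the continuity of $\tilde\psi$.
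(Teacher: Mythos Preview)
Your proof is correct and follows essentially the same approach as the paper: both reduce the claim to the uniform convergence $B_m\to B$ from (\ref{eq:Skor}), the uniform convergence $\psi_m\to\psi$ from (\ref{eq:psiconv}) (which, as you rightly note, only needs continuity of $\tilde\psi$ and hence only Assumptions 1 and 2), and the expansion (\ref{eq:tanh}) together with a uniform bound on $\psi_m$ coming from the common compact range $K(\omega)$. Your version is more detailed than the paper's---you spell out the three-term splitting of the Riemann-sum error and the passage from grid points to arbitrary $t$ via the linear interpolation, both of which the paper leaves implicit.
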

\begin{proof}
As was shown in the proof of Theorem \ref{th:measureconv}), $\psi_m$ a.s. uniformly converges to $\psi$ and the functions $t \mapsto B_m(\omega, t)$ are uniformly bounded on $[0, T]$ for almost every $\omega$. Thus by (\ref{eq:tanh}) we get that
\begin{multline*}
\lim_{m \to \infty} W_m(t) = \lim_{m \to \infty} \left(B_m(t) + \sum_{r=1}^{\lfloor t 2^{2m} \rfloor} \tanh\left(\psi_m(t_{r-1}) 2^{-m}\right) 2^{-m}\right) \\
= \lim_{m \to \infty} \left(B_m(t) + \sum_{r=1}^{\lfloor t 2^{2m} \rfloor} \psi_m(t_{r-1}) 2^{-2m}\right)
= B(t) + \int_0^t \psi(u) du = W(t) ,
\end{multline*}
a.s., uniformly for $t \in [0,T]$.
\end{proof}

\begin{lem} \label{le:discSDEQ}
In addition to Assumptions 1, 2 and 3, let us suppose that $\sigma(t,x) \in C^{1,1}([0, T] \times \mathbb{R})$. Then the discrete diffusion
\begin{equation}\label{eq:solm}
X_m(t_n) := \phi(t_n, B_m(t_n)) = \phi\left(t_n, W_m(t_n) - \sum_{r=1}^n \psi_m(t_{r-1}) 2^{-2m} \right)
\end{equation}
$(t_n = n2^{-2m} \in [0,T])$ approximately satisfies a difference equation that corresponds to the SDE (\ref{eq:SDEQ}):
\begin{multline}\label{eq:discSDE}
X_m(t_n) - x_0
=  \sum_{r=1}^{n} \sigma\left(t_{r-1}, X_m(t_{r-1})\right) \: \left(W_m(t_r) - W_m(t_{r-1})\right)  \\
+  \sum_{r=1}^{n} \mu\left(t_{r-1}, X_m(t_{r-1})\right) 2^{-2m} + \text{error},
\end{multline}
where $|\text{error}| < \epsilon$ with arbitrary $\epsilon > 0$ when $m \ge m_0(\omega)$, uniformly for $t_n \in [0, T]$.
\end{lem}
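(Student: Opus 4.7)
The plan is to start from the discrete It\^o identity already recorded as (\ref{eq:disc_Ito2}) for $X_m^*$, rewrite every piece in terms of $W_m$-increments and SDE coefficients, and collect everything else into the error term. First I would use (\ref{eq:appr1}) to replace $X_m^*(t_n)$ by $X_m(t_n)$ at the cost of an $O(2^{-m})$ deterministic-in-$m$ (but $\omega$-dependent) error. Then in the first sum on the right-hand side of (\ref{eq:disc_Ito2}), the ODE (\ref{eq:ODE}) gives $\phi'_u(t_{r-1}, B_m(t_{r-1})) = \sigma(t_{r-1}, X_m(t_{r-1}))$, so that sum becomes
\[
\sum_{r=1}^{n} \sigma(t_{r-1}, X_m(t_{r-1})) \bigl(B_m(t_r) - B_m(t_{r-1})\bigr).
\]

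Next I would pass from $B_m$-increments to $W_m$-increments using the definition (\ref{eq:Wm}) and the expansion (\ref{eq:tanh}):
\[
B_m(t_r) - B_m(t_{r-1}) = \bigl(W_m(t_r) - W_m(t_{r-1})\bigr) - \psi_m(t_{r-1}) 2^{-2m} + O(2^{-4m}).
\]
Summing against the bounded factor $\sigma(t_{r-1}, X_m(t_{r-1}))$ over the $O(2^{2m})$ indices, the $O(2^{-4m})$ contributions aggregate into an $O(2^{-2m})$ error. The crucial identity is the definition (\ref{eq:psim}) of $\psi_m$, which gives $\sigma(t_{r-1}, X_m(t_{r-1})) \psi_m(t_{r-1}) = \nu(t_{r-1}, B_m(t_{r-1})) - \mu(t_{r-1}, X_m(t_{r-1}))$. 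Substituting, the correction from the drift in $W_m$ produces exactly the desired $\mu$-term plus $-\sum_r \nu(t_{r-1}, B_m(t_{r-1})) 2^{-2m}$.

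It remains to match this last sum with the drift sum $\sum_r \nu_m(t_r, B_m(t_{r-1}), B_m(t_r)) 2^{-2m}$ coming from the second line of (\ref{eq:disc_Ito2}). Here I would invoke Lemma \ref{le:locdrift}: under the added hypothesis $\sigma \in C^{1,1}$, that lemma delivers
\[
\sup_{0 \le t_r \le T} \bigl| \nu_m(t_r, B_m(t_{r-1}), B_m(t_r)) - \nu(t_r, B_m(t_r)) \bigr| \to 0 \quad \text{a.s.},
\]
and the uniform continuity of $\nu$ on $[0,T] \times K(\omega)$ (where $K(\omega)$ is the compact range used in the proof of Lemma \ref{le:locdrift}) together with $|t_r - t_{r-1}| = 2^{-2m}$ and $|B_m(t_r) - B_m(t_{r-1})| = 2^{-m}$ shows $|\nu(t_r, B_m(t_r)) - \nu(t_{r-1}, B_m(t_{r-1}))| \to 0$ uniformly in $r$. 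Multiplying by $2^{-2m}$ and summing over at most $\lfloor T 2^{2m} \rfloor$ terms makes the total contribution arbitrarily small uniformly in $t_n \in [0,T]$.

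The main obstacle is really bookkeeping rather than anything deep: I have to be careful that the $O(\cdot)$ constants are $\omega$-dependent only through $K(\omega)$ and the uniform bounds on $\psi_m$, $\sigma$ and $\nu$ on that compact set, and that each aggregated error (the $O(2^{-m})$ from (\ref{eq:appr1}), the $O(2^{-2m})$ from the $\tanh$ expansion summed over $O(2^{2m})$ steps, and the Ces\`aro-type small term coming from Lemma \ref{le:locdrift}) remains $o(1)$ uniformly for $t_n \in [0,T]$. Once all three of these are combined, their sum is less than any prescribed $\epsilon>0$ for $m \ge m_0(\omega)$, which is exactly the form of the claim.
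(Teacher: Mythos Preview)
Your proposal is correct and follows essentially the same route as the paper: start from (\ref{eq:disc_Ito2}), use (\ref{eq:appr1}) to pass from $X_m^*$ to $X_m$, identify $\phi'_u=\sigma$ via the ODE, switch $B_m$-increments to $W_m$-increments via (\ref{eq:Wm})/(\ref{eq:tanh}), and substitute the definition (\ref{eq:psim}) of $\psi_m$. The only difference is in the last step: you bound $\sum_r\bigl[\nu_m(t_r,\dots)-\nu(t_{r-1},B_m(t_{r-1}))\bigr]2^{-2m}$ term-by-term using Lemma~\ref{le:locdrift} plus uniform continuity of $\nu$, whereas the paper first replaces $\nu_m$ by $\nu(t_r,B_m(t_r))$ via Lemma~\ref{le:locdrift} and then observes that $-\sum_r\nu(t_{r-1},B_m(t_{r-1}))2^{-2m}+\sum_r\nu(t_r,B_m(t_r))2^{-2m}$ telescopes to the single term $\{\nu(t_n,B_m(t_n))-\nu(0,0)\}2^{-2m}=O(2^{-2m})$. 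Both arguments are valid; the telescoping is a minor tidying that avoids one appeal to uniform continuity but gives the same $o(1)$ total error (since Lemma~\ref{le:locdrift} itself only yields $o(1)$).
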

\begin{proof}
Combine the discrete It\^o's formula (\ref{eq:disc_Ito2}) with (\ref{eq:appr1}), (\ref{eq:Wmstep}), and Lemma \ref{le:locdrift}, using the identity $\phi'_u\left(t_{r-1}, B_m(t_{r-1})\right) = \sigma\left(t_{r-1}, X_m(t_{r-1})\right)$ as well:
\begin{multline*}
X_m(t_n) - x_0 = X^*_m(t_n) - x_0 + O(2^{-m}) \\
=  \sum_{r=1}^{n} \sigma\left(t_{r-1}, X_m(t_{r-1})\right) \: \left(W_m(t_r) - W_m(t_{r-1})\right)  \\
- \sum_{r=1}^{n} \sigma\left(t_{r-1}, X_m(t_{r-1})\right) \psi_m(t_{r-1}) 2^{-2m}
+  \sum_{r=1}^{n} \nu(t_r, B_m(t_r)) 2^{-2m} + \text{error},
\end{multline*}
where $|\text{error}| < \epsilon$ with arbitrary $\epsilon > 0$ when $m \ge m_0(\omega)$, uniformly for $t_n \in [0, T]$.

Then use definitions (\ref{eq:psim}) for $\psi_m(t_{r-1})$ to obtain
\begin{multline*}
X_m(t_n) - x_0
=  \sum_{r=1}^{n} \sigma\left(t_{r-1}, X_m(t_{r-1})\right) \: \left(W_m(t_r) - W_m(t_{r-1})\right)  \\
+ \sum_{r=1}^{n} \mu\left(t_{r-1}, X_m(t_{r-1})\right) 2^{-2m} \\
-  \sum_{r=1}^{n} \nu(t_{r-1}, B_m(t_{r-1})) 2^{-2m} +  \sum_{r=1}^{n} \nu(t_r, B_m(t_r)) 2^{-2m} + \text{error}.
\end{multline*}

Cancelling the corresponding terms in the last two sums, only two terms remain:
\[
\left\{\nu(t_n, B_m(t_n)) - \nu(0, 0)\right\} 2^{-2m} = O(2^{-2m}).
\]
Here we used that by (\ref{eq:Skor}) and the continuity of $\nu(t,u)$, for almost every $\omega$, $|\nu(t_n, B_m(t_n))|$ is uniformly bounded when $t_n \in [0, T]$ and $m > m_0(\omega)$. This completes the proof of the lemma.
\end{proof}


\section{An algorithm and examples} \label{sec:AlgEx}


\subsection{An algorithm to approximate a diffusion} \label{ssec:Alg}

Under certain assumptions on the coefficients (see Section \ref{sec:Disc}), we have developed a method of approximation of an It\^o diffusion. First we gave a weak solution $X(t)=\phi(t, B(t))$, $t \in [0, T]$, of the SDE (\ref{eq:SDE}) w.r.t. a new probability measure $\mathbb{Q}$. Then for any $m \ge 0$ we took $X_m(t_r) = \phi(t_r, B_m(t_r))$, $t_r=r 2^{-2m} \in [0, T]$ which has steps with suitable $\mathbb{Q}_m$ probabilities approximating the probability measure $Q$. Theorem \ref{th:Xconv} showed that $X_m$ a.s. uniformly converges to $X$ on $[0,T]$. Here we describe the steps of an algorithm realizing this method.

First solve the ODE $\phi'_u(t, u) = \sigma(t, \phi(t,u))$ with the initial value $\phi(t,0)=x_0$. By  Assumption 2 in Section \ref{sec:Disc}, $\phi$ is a $C^{1,2}$ solution over $[0, T] \times \mathbb{R}$. An analytic solution may be substituted by a numerical one, even step-by-step as the algorithm goes as follows. The algorithm needs $\phi'_t$ and $\phi''_{uu}$ or their approximations as well.

Second, choose a suitable value of $m$: usually, $m=5$ or $m=6$ will do. Start with the initial values $B_m(0) = 0$ and $X_m(0) = x_0$. Proceed with time steps $t_r = r 2^{-2m}$, $(r=0,1,2,\dots)$ consecutively and determine the $\mathbb{Q}_m$ probability of the next step of the shrunken random walk $B_m$ on time interval $[t_{r}, t_{r+1}]$:
\[
\nu(t, u) = \phi'_t(t, u)) + \frac12 \phi''_{uu}(t, u),
\]
\[
\psi_m(t_{r}) = \frac{\nu(t_{r}, B_m(t_{r})) - \mu(t_{r}, X_m(t_{r}))}{\sigma(t_{r}, X_m(t_{r}))} .
\]
By Theorem \ref{th:Qm}, the conditional $\mathbb{Q}_m$ probability of an up-step of $B_m$ given $\mathcal{F}_{t_{r}}$ is
\[
q^{+}_m(t_{r}) = \frac12 - \frac12 \tanh\left(\psi_m(t_{r}) 2^{-m} \right) ,
\]
Generate a random number $U_r$, uniformly distributed on $[0, 1]$. Let
\[
B_m(t_{r+1}) = \left\{\begin{array}{ll}
                        B_m(t_r) + 2^{-m} & \text{if} \quad U_r \le  q^{+}_m(t_{r}), \\
                        B_m(t_r) - 2^{-m} & \text{otherwise.}
                      \end{array}
               \right.
\]
Take $X_m(t_{r+1}) = \phi(t_{r+1}, B_m(t_{r+1}))$.


\subsection{Some examples} \label{ssec:Ex}

In the following examples we consider some simple, time-homogeneous diffusions; that is, their drift $\mu(x)$ and diffusion coefficient $\sigma(x)$ do not depend explicitly on time.

\begin{enumerate}[(1)]

\item Let $\sigma(x) = ax+b$, where $a > 0$ and $b \ge 0$. The corresponding ODE is
\[
\phi' = a \phi + b, \qquad \phi(0) = x_0 > 0.
\]
Its solution is
\[
x = \phi(u) = \left(x_0 + \frac{b}{a}\right) e^{au} - \frac{b}{a}, \qquad u \in \mathbb{R}.
\]
Then our standing assumptions in Section \ref{sec:Meth} and the conditions of Lemma \ref{le:suffice} hold. The condition for the drift in Lemma \ref{le:suffice} can be rewritten here as
\begin{equation}\label{eq:linear}
|\mu(x)| \le \widetilde{K} (ax+b) \left(1 + \left| \log (ax+b)\right|\right), \qquad  -b/a < x < \infty,
\end{equation}
with some $\widetilde{K} < \infty$.

The case of \emph{geometric Brownian motion}: $\mu(x) = c x$ and $\sigma(x) = a x$, where $a > 0$ and $c \in \mathbb{R}$. Then
\[
\phi(u) = x_0 e^{au}, \quad \nu(u) = \frac{a^2 x_0}{2} e^{au}, \quad \psi_m(t_r) = \frac{a}{2} - \frac{c}{a},
\]
and (\ref{eq:linear}) clearly holds, our method is applicable. Figures \ref{fig:geom_BM_m=5} and \ref{fig:geom_BM_m=6} show typical sample paths of the approximate and the exact solutions in this case, with $m=5$ and $m=6$, respectively, while $T=5$, $a=c=x_0=1$.
\begin{figure}[ht]
   \begin{center}
     \includegraphics[scale=0.65]{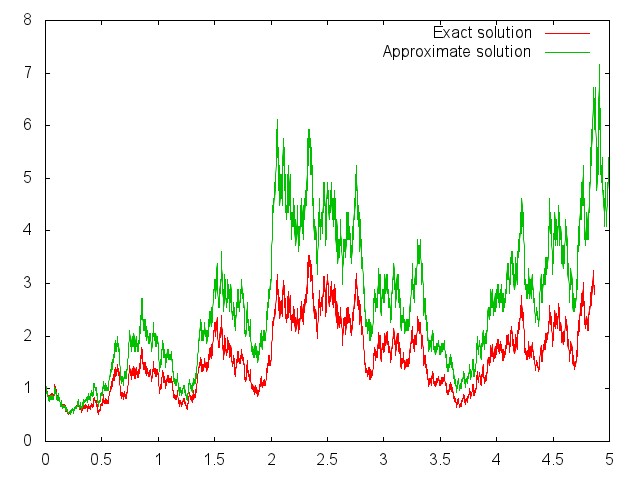}
      \caption{Approximation of geometric Brownian motion, m=5}
      \label{fig:geom_BM_m=5}
    \end{center}
\end{figure}

\bigskip

\begin{figure}[ht]
   \begin{center}
     \includegraphics[scale=0.65]{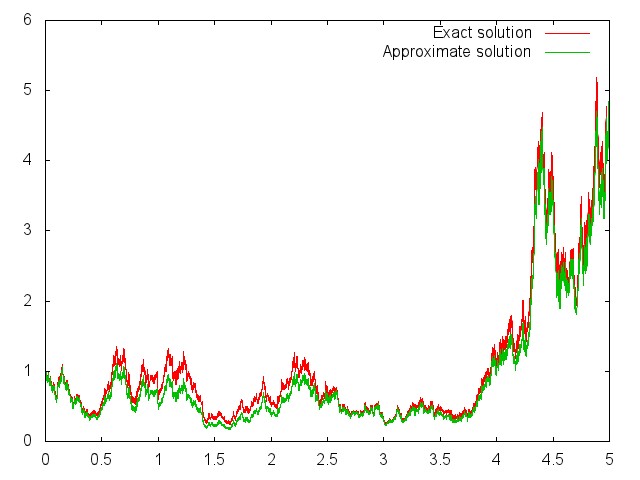}
      \caption{Approximation of geometric Brownian motion, m=6}
      \label{fig:geom_BM_m=6}
    \end{center}
\end{figure}

Unfortunately, it is easy to find examples where our method \emph{does not work}. Let $\mu(x) = - d $ and $\sigma(x) = a x$, where $a,d, x_0  > 0$. Then
\[
\nu(u) = \frac{a^2 x_0}{2} e^{au}, \quad \psi_m(t_r) = \frac{a}{2} + \frac{d}{a x_0} e^{-a B_m(t_r)},
\]
and (\ref{eq:linear}) does not hold. In fact, $\phi(B(t)) = x_0 e^{a B(t)}$ supplied by our method cannot be a solution of this SDE, because $\phi(B(t))$ is positive for all $t$, but the true solution $X(t)$ is negative with positive probability, at least when $t$ is large enough. The latter statement follows from the fact that the unique strong solution of this SDE is
\begin{multline*}
      X(t) = e^{a B(t) - a^2t/2} \left\{x_0 - d \int_0^t e^{-a B(s) + a^2s/2} d s \right\} \\
      =: Y(t) \left\{x_0 - d \int_0^t (Y(s))^{-1} d s\right\},
\end{multline*}
where $Y(t)$ has a lognormal distribution with expectation $e^{a^2t}$. Thus $Z(t) := \int_0^t (Y(s))^{-1} d s$ is a positive random variable with expectation $(e^{a^2t} - 1)/a^2$ and $\mathbb{P}\left( Z(t) > x_0/d \right) > 0$, at least when $t$ is large enough.
This fact also shows that the process $\Lambda(t)$ given by (\ref{eq:Lambda}) is not a $\mathbb{P}$-martingale in this example. Figure \ref{fig:counter} shows typical sample paths of the approximate and the exact solutions in this case, with $m=5$, $T=5$, $a=d=x_0=1$.
\begin{figure}[ht]
   \begin{center}
     \includegraphics[scale=0.65]{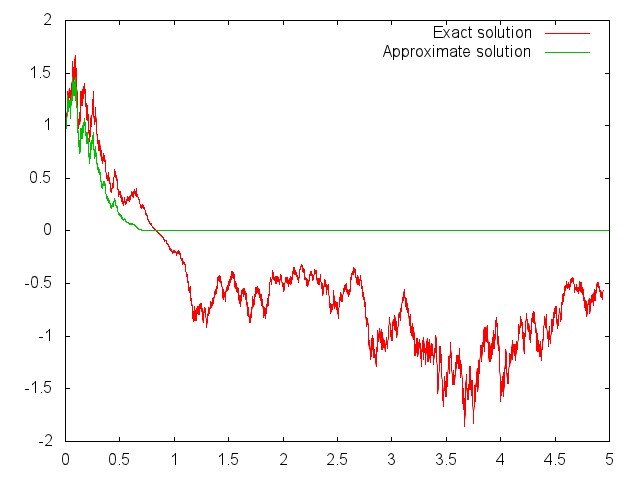}
      \caption{A counterexample, m=5}
      \label{fig:counter}
    \end{center}
\end{figure}

\item Let $\sigma(x) = b > 0$. Then our ODE is $\phi' = b$, $\phi(0) = x_0 \in \mathbb{R}$. Its solution is $x = \phi(u) = bu + x_0$. Thus the condition for the drift in Lemma \ref{le:suffice} can be rewritten here as
\begin{equation}\label{eq:driftineq}
|\mu(x)| \le \widetilde{K} \left(1 + |x|\right), \qquad x \in \mathbb{R},
\end{equation}
with some $\widetilde{K} < \infty$.

In particular, in the case of an \emph{Orstein--Uhlenbeck process}, where $\mu(x) = cx+d$ ($c, d \in\mathbb{R}$) and $\sigma(x) = b > 0$,
\[
\nu(u) = 0, \quad \psi_m(t_r) = - \frac{c x_0}{b} - c B_m(t_r).
\]
Here (\ref{eq:driftineq}) does hold and our method is applicable.

\end{enumerate}

\subsection*{Acknowledgement}
The authors are indebted to Tibor Homoki (currently: MSc student at BUTE) for writing a Python simulation program of the algorithm described above and producing many useful results, including the attached figures.


\end{document}